\documentclass{amsart}
\usepackage{amsmath}
\usepackage{graphicx}
\usepackage{latexsym}
\usepackage{color}
\usepackage{amscd}
\usepackage[all]{xy}
\usepackage{enumerate}
\usepackage{hyperref}
\usepackage{subfigure}
\usepackage{amssymb}
\parskip.05in

\addtolength{\topmargin}{0.1cm}
\addtolength{\textheight}{-0.2in}

\theoremstyle{plain}
\newtheorem{theorem}{Theorem}[section]

\newtheorem{lemma}[theorem]{Lemma}
\newtheorem{proposition}[theorem]{Proposition}

\theoremstyle{definition}
\newtheorem{definition}[theorem]{Definition}

\newtheorem{remark}[theorem]{Remark}

\theoremstyle{definition}

\makeatletter

\makeatother

\setlength\arraycolsep{2pt}

\DeclareMathOperator{\Map}{Map}
\DeclareMathOperator{\Diff}{Diff}
\DeclareMathOperator{\Sym}{Sym}
\DeclareMathOperator{\h}{\mathcal{H}}
\DeclareMathOperator{\I}{\mathcal{I}}

\setlength\arraycolsep{2pt}

\newcommand{\CPb}{\overline{\mathbb{CP}}{}^{2}}
\newcommand{\CP}{{\mathbb{CP}}{}^{2}}

\newcommand{\R}{\mathbb{R}}
\newcommand{\Q}{\mathbb{Q}}

\newcommand{\N}{\mathbb{N}}

\newcommand{\Pa}{\partial}
\newcommand{\mc}[1]{\mathcal{#1}}

\def\Ker{\operatorname{Ker}}

\def\Diff{\operatorname{Diff}}

\def\id{\operatorname{id}}

\def\Crit{\operatorname{Crit}}

\def\Sign{\operatorname{Sign}}


\def \x {\times}
\def \eu{{\text{e}}}

\begin{document}

\title[Broken Lefschetz fibrations and mapping class groups]
{Broken Lefschetz fibrations \\ and mapping class groups}

\author[R. \.{I}. Baykur]{R. \.{I}nan\c{c} Baykur}
\address{Department of Mathematics and Statistics, University of Massachusetts, Amherst, MA 01003-9305, USA}
\email{baykur@math.umass.edu}

\author[K. Hayano]{Kenta Hayano}
\address{Department of Mathematics, Graduate School of Science, Hokkaido University, Sapporo, Hokkaido 060-0810, Japan}
\email{k-hayano@math.sci.hokudai.ac.jp}

\begin{abstract}
The purpose of this note is to explain a combinatorial description of closed smooth oriented $4$-manifolds in terms of positive Dehn twist factorizations of surface mapping classes, and further explore these connections. This is obtained via monodromy representations of simplified broken Lefschetz fibrations on $4$-manifolds, for which we provide an extension of Hurwitz moves that allows us to uniquely determine the isomorphism class of a broken Lefschetz fibration.  We furthermore discuss broken Lefschetz fibrations whose monodromies are contained in special subgroups of the mapping class group; namely, the hyperelliptic mapping class group and in the Torelli group, respectively, and present various results on them which extend or contrast with those known to hold for honest Lefschetz fibrations. Lastly, we show that there are $4$-manifolds admitting infinitely many pairwise nonisomorphic relatively minimal broken Lefschetz fibrations with isotopic regular fibers.   
\end{abstract}

\maketitle

\setcounter{secnumdepth}{2}
\setcounter{section}{0}

\section{Introduction} 

Broken Lefschetz fibrations (BLF in short) are smooth surjective maps from compact smooth oriented $4$-manifolds to orientable surfaces, the very definition of which allow certain unstable and stable singularities to coexist: namely, their singular set consists of indefinite fold singularities along embedded circles and Lefschetz type singularities on a discrete set disjoint from the former. Let $X$ and $B$ be compact oriented manifolds of dimension four and two, respectively, and $f\colon\, X\to B$ be a smooth map. Given any surjective continuous map from a closed oriented $4$-manifold $X$ to the $2$-sphere, there exists a rather special BLF on $X$ within the same homotopy class, called a \emph{simplified broken Lefschetz fibration}, satisfying the following properties: it has only connected fibers and no exceptional spheres contained on the fibers, has at most one circle of indefinite fold singularities whose image in the base is embedded, and all of its Lefschetz critical points lie on distinct fibers of the highest genera. These fibrations were introduced by the first author in \cite{B1}, and allow one to study the underlying topology of the $4$-manifold effectively. (See for instance \cite{B1, B2, B3, BK, H1, H2, HS1, HS2}.) 

The underlying topology of a genus-$g$ simplified BLF is rather simple: It is either a relatively minimal genus $g$ Lefschetz fibration over the $2$-sphere, or it decomposes as a relatively minimal genus $g$ Lefschetz fibration over a $2$-disk, a trivial genus $g-1$ bundle over a $2$-disk, and a fibered cobordism in between prescribed by a single round handle \cite{B1}. Unlike general BLFs, this \textit{simplified} picture presents two great advantages: (1) it allows one to equip the $4$-manifold $X$ with a rather simple handlebody decomposition, and in turn makes it possible to identify the total space of the fibration via handle calculus; and (2) it leads to a description of $X$ in terms of curves on surfaces and factorizations of mapping class groups in terms of positive Dehn twists. The current article will elaborate on the latter aspect, in hopes to convince the reader that the study of smooth $4$-manifolds can be effectively translated to that of certain positive factorizations in the mapping class groups. By a slight abuse of language, we will thus talk about BLFs which are always simple, without further mentioning of the extra assumptions.
\footnote{An analogous generalization of fibrations on $4$-manifolds are the \textit{simplified purely wrinkled fibrations} studied by Williams \cite{W}, which are also prescribed in terms of curves on a surface. However, the main disadvantage in the latter is the lack of understanding of when such a description indeed hands us a wrinkled fibration over the $2$-sphere (and not just over the $2$-disk). The only sensible way of overcoming this difficulty has been perturbing them to simplified BLFs and then appealing to mapping class group arguments we will detail here.}

Let $\Map(\Sigma_g)$ denote the mapping class group of orientation-preserving diffeomorphisms of the genus $g$ orientable surface $\Sigma_g$ and assume that $g \geq 3$. Given a genus $g$ (simplified) BLF $f \colon X \to S^2$, there is an associated ordered tuple of cycles $(c; c_1,\ldots,c_n)$, where $c, c_1, \ldots, c_n$ are simple closed curves on $\Sigma_g$. Such a tuple of cycles prescribes a genus-$g$ BLF, if and only if 
\[ \mu= t_{c_n} \cdot \ldots \cdot t_{c_1} (c) = \pm c  \, , \text{and} \]
\[ \mu \in \text{Ker}(\Phi_c: \Map(\Sigma_{g}) \to \Map(\Sigma_{g-1})) \, , \]
as discussed in \cite{B1}. In this case, we call $(c; c_1,\ldots,c_n)$ the \emph{Hurwitz cycle system} $W_f$ associated to $f$ (notice how the ordered tuple has a distinguished first entry). The first part of our paper aims to obtain a one-to-one correspondence between BLFs and Hurwitz cycle systems modulo some natural equivalence relations, similar to the case of Lefschetz fibrations. 

Two BLFs $f_i: X_i\rightarrow B_i$, $i=1,2$ are said to be \emph{isomorphic} if there exist orientation preserving diffeomorphisms $\Phi: X_1\rightarrow X_2$ and $\varphi: B_1\rightarrow B_2$ such that $f_2\circ \Phi = \varphi\circ f_1$. On the other hand, given a Hurwitz cycle system $W=(c; c_1, \ldots, c_n)$, let an \emph{elementary transformation} of $W$ be any modification of the type
\[
(c; c_1,\ldots ,c_i,c_{i+1}, \ldots  , c_n) \longrightarrow (c; c_1,\ldots, c_{i+1}, t_{c_{i+1}}(c_{i}), \ldots, c_n) ,
\]
and for any $h \in \Map(\Sigma_g)$ let the \emph{simultaneous action by $h$} on $W$ be the modification 
\[
(c; c_1,\ldots, c_n) \longrightarrow (h(c); h(c_1),\ldots, h(c_n))
\]
both of which are easily seen to be resulting in new Hurwitz cycle systems. We call two Hurwitz cycle systems $W_1$ and $W_2$ \emph{equivalent} or \emph{Hurwitz equivalent}, if one can be obtained from another via a sequence of elementary transformations, simultaneous actions, and their inverses. 

In Section~\ref{uniqueness}, we introduce the above moves and prove our main result, which gives a complete combinatorial description of (simplified) broken Lefschetz fibrations in terms of Hurwitz cycle systems: 

\begin{theorem} \label{bijectionthm}
For $g \geq 3$ there exists a bijection:
\begin{eqnarray*}
\left\{\begin{array}{c}
\mbox{Genus $g$ }\\
\mbox{broken Lefschetz fibrations}\\
\mbox{up to isomorphism}\\
\end{array}\right\}
&
\longleftrightarrow
&
\left\{\begin{array}{c}
\mbox{Hurwitz cycle systems }\\
\mbox{up to Hurwitz equivalence }\\
\end{array}\right\}
\end{eqnarray*}

\end{theorem}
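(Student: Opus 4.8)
The plan is to construct explicit maps in both directions and show that they are mutually inverse, with the bulk of the work devoted to verifying that Hurwitz equivalence on the combinatorial side corresponds exactly to isomorphism on the geometric side. \textbf{From fibrations to cycle systems.} Given a genus-$g$ BLF $f \colon X \to S^2$, I would first use the simplified structure to write $S^2 = D_+ \cup_\partial D_-$, where $D_+$ contains all Lefschetz critical values and the fibers over $\Int D_-$ have genus $g-1$, with the indefinite fold mapping to the common boundary circle. Fixing a base point $b_0 \in \partial D_+$ together with an identification $f^{-1}(b_0) \cong \Sigma_g$, a choice of ordered Hurwitz arcs to the critical values reads off the Lefschetz vanishing cycles $c_1, \dots, c_n$, while the round handle contributes the distinguished cycle $c$, namely the curve on $f^{-1}(b_0)$ that collapses as one crosses the fold into $D_-$. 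This produces $W_f = (c; c_1, \dots, c_n)$, and the conditions $\mu = t_{c_n}\cdots t_{c_1}(c) = \pm c$ and $\mu \in \Ker(\Phi_c)$ are forced by \cite{B1}. To see the assignment is well defined up to Hurwitz equivalence, I would check the two sources of ambiguity: resliding or reordering the Hurwitz arcs yields precisely the elementary transformations, and changing the fiber identification by $h \in \Map(\Sigma_g)$ yields the simultaneous action by $h$, which acts on $c$ as well since the round handle cycle transforms covariantly. Moving $b_0$ around $\partial D_+$, or altering the arc to the fold circle, is absorbed into these two moves.

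\textbf{From cycle systems to fibrations.} Conversely, given any tuple $(c; c_1, \dots, c_n)$ satisfying the two conditions, I would reconstruct a BLF following \cite{B1}: over $D_+$ build the relatively minimal genus-$g$ Lefschetz fibration prescribed by $c_1, \dots, c_n$, which is classical; attach a round handle along $c$ over a collar of $\partial D_+$, consistent precisely because $\mu(c) = \pm c$; and then cap off over $D_-$ with the trivial $\Sigma_{g-1}$-bundle. The gluing over $D_-$ is possible because $\Phi_c(\mu)$ is trivial in $\Map(\Sigma_{g-1})$, and it is unique up to isotopy because $\Diff_0(\Sigma_{g-1})$ is simply connected for $g-1 \ge 2$ by Earle--Eells. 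This last point is exactly where the hypothesis $g \ge 3$ enters, and it is what kills the potential $\pi_1(\Diff)$-ambiguity in closing up the fibration over $S^2$.

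\textbf{The two maps are inverse, and injectivity.} The composition from a cycle system to a BLF and back returns the original tuple up to a re-choice of Hurwitz data, hence up to Hurwitz equivalence, while the opposite composition returns $f$ up to isomorphism by the reconstruction above. The substantive point is injectivity: if $f_1$ and $f_2$ have Hurwitz-equivalent cycle systems, then $f_1 \cong f_2$. Since the singular set of a BLF is intrinsic, any base diffeomorphism $\varphi$ realizing an isomorphism must carry the fold image to the fold image and Lefschetz values to Lefschetz values; I would isotope $\varphi$ to preserve the decomposition $D_+ \cup D_-$, the base point, and the chosen arcs. Over $D_+$ the remaining freedom is the classical correspondence for honest Lefschetz fibrations, which matches exactly the elementary transformations and the simultaneous action. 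The new content lies over the fold and $D_-$, where one must promote a fiberwise identification over $\partial D_+$ to a genuine bundle isomorphism of the round cobordism together with the trivial piece over $D_-$; here simple-connectivity of $\Diff_0(\Sigma_{g-1})$ guarantees that any two such extensions agree up to isotopy, so no move beyond the two listed is needed.

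\textbf{Main obstacle.} I expect the principal difficulty to be precisely the analysis over the round handle and $D_-$: translating an abstract isomorphism of total spaces into a controlled identification of the round cobordisms, while tracking the distinguished role and the orientation ($\pm c$) of the round handle cycle $c$. The classical Lefschetz machinery dispatches $D_+$ cleanly, but one must verify that extending across the fold introduces no further ambiguity beyond the simultaneous action, and this claim rests entirely on the Earle--Eells contractibility of $\Diff_0(\Sigma_{g-1})$ and hence on the standing assumption $g \ge 3$.
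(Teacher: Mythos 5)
Your overall architecture --- read off $(c;c_1,\dots,c_n)$ from a Hurwitz path system, check well-definedness under the two moves, realize any admissible tuple following \cite{B1}, and reduce injectivity to matching the higher side, the round cobordism, and the lower side separately --- is the same as the paper's. The well-definedness and realization steps, and the treatment of the higher side via the classical Kas--Matsumoto correspondence, agree with what is done in Section~\ref{uniqueness}.

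The gap is in the step you yourself flag as the main obstacle: extending the fiber-preserving diffeomorphism of the higher sides across the round cobordism. You attribute this entirely to Earle--Eells contractibility of $\Diff_0(\Sigma_{g-1})$, but that only disposes of the lower side $D_-$, where the map is an honest $\Sigma_{g-1}$-bundle. The round cobordism is not a bundle, and ``promoting a fiberwise identification over $\partial D_+$ to a genuine isomorphism'' across the fold is exactly the point that needs an argument. The paper needs two further ingredients here: (i) a parametrized normal form for a neighborhood of the fold circle (Lemma~\ref{T:model_fold}, resting on J\"anich's symmetry results for singularity germs), so that the two round cobordisms can each be compared against a standard local model $I\times B^1\times B^2/\!\sim$; and (ii) the fact that the connected components of the space of embedded circles in $\Sigma_g$ are simply connected for $g\ge 2$, which is what allows one to choose horizontal distributions for $f_1$ and $f_2$ whose parallel transports carry the fold vanishing cycle $c$ correctly \emph{coherently as one travels around the fold image circle}. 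Without (ii) there is a potential monodromy obstruction in $\pi_1$ of the space of circles to matching the two fold vanishing cycles fiberwise over $\partial D_+$, and without (i) you have no way to glue the parallel-transport identification to anything across the singular locus itself. Your appeal to $\Diff_0(\Sigma_{g-1})$ enters only afterwards, when capping off over $D_-$; as written, the crossing of the fold is unjustified.
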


\noindent Regarding genuine Lefschetz fibrations as BLFs with empty round locus, the above bijection restricts to the well-known bijection between isomorphism classes of Lefschetz fibrations and Hurwitz cycle systems due to Kas \cite{Kas} and Matsumoto \cite{Matsumoto}, independently. 

Recall that the \textit{monodromy} of a genus-$g$ Lefschetz fibration with monodromy representation 
\[ t_{c_n} \cdot \ldots \cdot t_{c_1} =1  \]
is said to be contained in a subgroup $N < \Map(\Sigma_g)$ if all $t_{c_1}, \ldots, t_{c_n}$, possibly after conjugating all with the same mapping class, lie in $N$. There are two special subgroups of $\Map(\Sigma_g)$, the hyperelliptic mapping class group $\h(\Sigma_g)$ and the Torelli group $\I(\Sigma_g)$, which are generated by the elements in $\Map(\Sigma_g)$ commuting with a fixed hyperelliptic involution on $\Sigma_g$, and those which act trivially on the first homology $H_1(\Sigma_g)$, respectively. Lefschetz fibrations (possibly over higher genera surfaces) with monodromies contained in $\h(\Sigma_g)$ or $\I(\Sigma_g)$, shortly called as \textit{hyperelliptic Lefschetz fibrations} and \textit{Torelli Lefschetz fibrations}, respectively, has been of special interest, as we briefly discuss below. The second part of our paper will focus on generalizations of these results to BLFs ---where we again consider the images of $t_{c_1}, \ldots, t_{c_n}$. 

Siebert and Tian proved that hyperelliptic Lefschetz fibrations always arise as (symplectic) branched covers of rational ruled surfaces \cite{ST} (also see \cite{Fuller}), and were able to prove that every genus-$2$ Lefschetz fibration with non-separating fibers is holomorphic building on this fact \cite{ST2}. The following result, obtained by the second author and Sato, is a natural extension of the result of Siebert-Tian and Fuller to BLFs:

\begin{theorem}[\cite{HS1}] \label{hyperellipticthm}
Given a genus $g \geq 3$ hyperelliptic broken Lefschetz fibration \linebreak $f: X \to S^2$ with no separating vanishing cycles, there exists an involution $\eta$ on $X$ so that $X$ is a double branched covering of a rational ruled surface. In the presence of separating vanishing cycles, the same holds for a blow-up of $X$.  
\end{theorem}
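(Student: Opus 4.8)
The plan is to adapt the Siebert--Tian--Fuller strategy for honest hyperelliptic Lefschetz fibrations to the broken setting, constructing a global fiberwise involution on $X$ and exhibiting its quotient as a ruled surface, with the essential new work concentrated at the single round locus where the fiber genus drops. First I would fix a hyperelliptic involution $\iota$ on the generic fiber $\Sigma_g$ whose symmetric Dehn twists generate $\h(\Sigma_g)$. Since the monodromy of $f$ lies in $\h(\Sigma_g)$, every vanishing cycle can be taken $\iota$-symmetric, so $\iota$ extends over each Lefschetz singularity just as in the closed case: the standard node is equivariant under $\iota$, and its quotient is a smooth sphere tangent to the branch locus, which is precisely what produces the node upstairs. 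Over the subregion of the base where the fibers have genus $g$, gluing these copies of $\iota$ yields an involution whose quotient is an $S^2$-fibration and whose branch locus meets each fiber in the $2g+2$ fixed points of $\iota$.

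The crux, and the genuinely new step, is the round locus. Here I would exploit the simplified structure: crossing the indefinite fold decreases the genus by one via surgery along the fold cycle $c$, which---being the distinguished first entry of the Hurwitz cycle system $W_f$---is $\iota$-symmetric and may be taken invariant, with $\iota$ restricting to a reflection of the circle $c$ fixing exactly two Weierstrass points. The key claim is that $\iota$ then descends to a hyperelliptic involution $\iota'$ on the lower-genus fiber $\Sigma_{g-1}$ compatibly with the genus-reducing surgery, so that the two fiberwise involutions assemble into a single smooth involution $\eta$ on all of $X$. Concretely, I would write down the $\iota$-equivariant local model of the round $1$-handle cobordism and verify smoothness of $\eta$ there; since $\iota$ swaps the two sides of the nonseparating curve $c$, it interchanges the two capping disks, so the two fixed points lying on $c$ are exactly the pair absorbed by the surgery, and the branch locus interpolates smoothly between $2g+2$ and $2(g-1)+2 = 2g$ points across the fold. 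Granting this, $\eta$ has a smooth fixed surface, the quotient $X/\eta$ is a smooth $S^2$-fibration over $S^2$---hence a rational ruled surface---and $X \to X/\eta$ is the asserted double branched cover.

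The main obstacle I anticipate is twofold. The technical heart is the equivariance and smoothness of $\eta$ across the round handle: one must show that the surgery decreasing the genus can be performed compatibly with a \emph{consistent} pair of hyperelliptic involutions on the two sides, rather than merely with some involution chosen independently on each, and then identify the branch curve precisely in this local model. The second difficulty is already present for Lefschetz fibrations: a \emph{separating} vanishing cycle forces the fiberwise fixed points to collide, producing a singular branch curve (and a singular quotient), so the clean double-cover description breaks down. Here I would resolve the singularity by blowing up $X$ at the offending node---equivalently, blowing up the ruled quotient and passing to the induced cover---which restores a smooth branch locus over a (blown-up) rational ruled surface and accounts for the blow-up appearing in the statement.
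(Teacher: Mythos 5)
Your outline is essentially the argument of Hayano--Sato \cite{HS1}, which is where this paper actually gets the theorem from (it is quoted, not reproved, here): extend the fiberwise hyperelliptic involution over the Lefschetz points as in Siebert--Tian and Fuller, and over the round cobordism use the equivariant local model of the indefinite fold, in which the two Weierstrass points on the nonseparating fold cycle $c$ are exactly the ones lost in passing to the genus $g-1$ side, the two capping disks are interchanged, and the quotient becomes an honest $S^2$-bundle while the branch multisection acquires a fold tangency along the round image. This matches both the strategy of \cite{HS1} and the descriptive remarks following Theorem~\ref{T:involution HBLF} in the paper (the branched multisection folding along the round locus, and the blow-up at separating vanishing cycles), so I have nothing to add beyond noting that the smoothness check in the equivariant fold model, which you correctly flag as the crux, does go through.
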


\noindent In Section~\ref{hyperellipticBLFs} we will discuss this result in some more detail, along with a couple other topological implications and extensions of earlier results on hyperelliptic Lefschetz fibrations.

As for Torelli monodromies, Smith proved that there are no nontrivial relatively minimal (i.e.~without null-homotopic vanishing cycles $c_i$) genus-$g$ (symplectic) Lefschetz fibrations over the $2$-sphere whose monodromy are contained in $\I(\Sigma_g)$ \cite{Smith}. (On the other hand, the first author and Margalit showed that this is no longer true when the base surface is of higher genus; namely, they constructed infinitely many pairwise inequivalent genus-$g$ nontrivial Torelli Lefschetz fibrations over genus-$h$ surfaces for all $g \geq 3$ and $h \geq 2$.) In contrast, we observe that there are $4$-manifolds admitting relatively minimal (near-symplectic) nontrivial Torelli BLFs, whereas we note that such examples are obstructed in a natural way that extends the result of Smith ---bear in mind that every nontrivial Lefschetz fibration admits an almost complex structure:

\begin{theorem} \label{torellithm}
An almost complex $4$-manifold does not admit a nontrivial relatively minimal Torelli broken Lefschetz fibration. Moreover, for all $g \geq 2$, there are infinitely many genus-$g$ nontrivial relatively minimal Torelli broken Lefschetz fibrations on near-symplectic $4$-manifolds.  
\end{theorem}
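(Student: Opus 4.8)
\emph{Plan.} The statement has two halves, and I would treat them separately. For the obstruction, suppose $f\colon X\to S^2$ is a nontrivial relatively minimal Torelli broken Lefschetz fibration of genus $g$. Since the monodromy lies in $\I(\Sigma_g)$, every Lefschetz vanishing cycle $c_i$ is null-homologous, hence a separating curve on $\Sigma_g$. I would then split into two cases according to the round locus. If it is empty, $f$ is an honest nontrivial relatively minimal Lefschetz fibration over $S^2$ with Torelli monodromy; such a fibration is symplectic by Gompf--Thurston (its fiber being homologically essential), so it is excluded by Smith's theorem, and in particular no almost complex manifold carries one. The new content is the case of a nonempty round locus, where the indefinite fold has a \emph{non}separating vanishing cycle $c$.

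For that case I would argue through characteristic numbers, using the standard necessary condition: a closed oriented $4$-manifold admitting an almost complex structure carries a characteristic class $h$ with $h^2=2\chi+3\sigma$, and since $h^2\equiv\sigma\pmod 8$ for characteristic $h$, this forces $\chi+\sigma\equiv 0\pmod 4$. I would compute $\chi$ and $\sigma$ by decomposing $X=f^{-1}(D_+)\cup f^{-1}(D_-)$ along $S^2=D_+\cup D_-$, with the fold placed in $D_-$. The Euler characteristics are $\chi(f^{-1}(D_+))=(2-2g)+n$ and $\chi(f^{-1}(D_-))=4-2g$, glued along a $\Sigma_g$-mapping torus of zero Euler characteristic, giving $\chi(X)=6-4g+n$. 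For the signature I would use Novikov additivity: the fold piece $f^{-1}(D_-)$ carries no Lefschetz handles and contributes $0$, while in $f^{-1}(D_+)$ each separating vanishing cycle $c_i$ caps off — its Lefschetz thimble closes against the subsurface of $\Sigma_g$ it bounds — to a closed surface $S_i$ with $S_i^2=-1$; because the caps can be pushed into distinct fibers, the classes $S_i$ are mutually orthogonal and orthogonal to the fiber, so $\sigma(f^{-1}(D_+))=-n$ and $\sigma(X)=-n$. Hence $\chi(X)+\sigma(X)=6-4g\equiv 2\pmod 4$, which violates the congruence above; thus $X$ is not almost complex.

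For the existence half I would write down explicit Hurwitz cycle systems. Fix a genus-one subsurface $N\subset\Sigma_g$ whose bounding (separating) curve is $\delta=\partial N$, and let $c\subset N$ be a nonseparating curve, so $\delta\cap c=\emptyset$. For each $n\ge 1$ set $W_n=(c;\delta,\ldots,\delta)$ with $n$ copies of $\delta$. Then $t_\delta^{\,n}(c)=c$ since $\delta$ and $c$ are disjoint, and $t_\delta^{\,n}\in\Ker(\Phi_c)$ because collapsing $c$ turns $N$ into a disk, so $\delta$ bounds a disk in $\Sigma_{g-1}$ and $\Phi_c(t_\delta)=1$. Thus each $W_n$ prescribes a genus-$g$ nontrivial, relatively minimal ($\delta$ being essential for $g\ge 2$) Torelli broken Lefschetz fibration. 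The signature computation above gives $\sigma(X_n)=-n$, so the $X_n$ are pairwise non-homeomorphic and the fibrations are pairwise nonisomorphic, yielding infinitely many for every $g\ge 2$. Finally $b^+(X_n)=1>0$ and the fibers are homologically essential, so by the near-symplectic realization for broken Lefschetz fibrations each $X_n$ admits a near-symplectic structure whose zero set is the fold.

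\emph{Main obstacle.} The crux is the signature identity $\sigma(X)=-n$: verifying that the capped thimbles of the separating cycles give mutually orthogonal $(-1)$-classes orthogonal to the fiber, and that the fold region contributes nothing. This is precisely what pushes $\chi+\sigma$ into the forbidden residue $2\pmod 4$ rather than $0$, and it is the feature distinguishing genuine broken fibrations — where the fold makes $b_1=2g-1$ \emph{odd} — from honest Torelli Lefschetz fibrations, for which $b_1=2g$ is even, the congruence holds, and the non-existence is instead the deeper symplectic phenomenon of Smith. A secondary point demanding care is invoking the correct near-symplectic existence statement so that the constructed total spaces are genuinely near-symplectic with zero locus the prescribed fold.
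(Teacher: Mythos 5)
Your overall strategy coincides with the paper's: the obstruction is the characteristic-number congruence $\chi+\sigma\equiv 0\pmod 4$ (equivalently, integrality of $\chi_h=\tfrac14(\chi+\sigma)$) applied to $\chi(X)=6-4g+n$ together with a signature computation in which each separating Lefschetz vanishing cycle contributes $-1$; and your examples are built from the Hurwitz cycle systems $(c;\delta,\ldots,\delta)$ with $\delta$ bounding a genus-one subsurface containing $c$, which is exactly the construction of Proposition~\ref{TorelliAbundance}.

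The one genuine gap is your claim that the fold region $f^{-1}(D_-)$ ``carries no Lefschetz handles and contributes $0$'' to the signature. Absence of Lefschetz handles does not force the signature of this piece to vanish: the round cobordism is a nontrivial $4$-manifold with boundary (a mapping torus cross an interval with a round $2$-handle attached), and its signature depends on the boundary monodromy $\varphi$. The paper cites \cite[Lemma 4.4]{HS2} for the fact that this contribution $\epsilon$ lies in $\{-1,0,+1\}$, and it takes a separate Mayer--Vietoris diagram chase (Remark~\ref{roundTorelli}), using precisely the Torelli hypothesis on $\varphi$, to conclude that $\epsilon=0$; for general BLFs it can be $\pm 1$, so your stated justification is a non sequitur. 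Without at least the bound $|\epsilon|\leq 1$ the congruence argument does not close, since a priori the fold region could move $\chi+\sigma$ out of the residue class $2\pmod 4$. Note that the paper sidesteps the exact value entirely: $\tfrac14(2+\epsilon)$ is a non-integer for every $\epsilon\in\{-1,0,1\}$, so the crude bound already suffices. Everything else in your write-up --- the reduction of the empty-round-locus case to Smith's theorem, the verification that $t_\delta^n\in\Ker(\Phi_c)$ and $t_\delta^n(c)=c$, and the near-symplectic conclusion from $b^+>0$ and homological essentialness of the fiber --- is sound, though you could distinguish the $X_n$ by Euler characteristic (as the paper does) rather than by signature, which would avoid leaning on the same unjustified signature claim a second time.
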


Lastly, we turn to the question of the existence of nonisomorphic genus-$g$ fibrations for a fixed $g$ on a given $4$-manifold. Recently many families of nonisomorphic genus-$g$ Lefschetz fibrations and pencils have been produced \cite{ParkYun, B5, BHmultisection}. Nevertheless, there are no known examples of \textit{infinite families} of Lefschetz fibrations on a symplectic $4$-manifold $X$, and it is undetermined whether or not the diffeomorphism class of the fiber uniquely determines a Lefschetz fibration/pencil on $X$ up to isomorphisms. None of the examples above provide pairs of Lefschetz fibrations $f_i: X \to S^2$ with regular fibers $F_i$, for $i=1,2$, such that there is an ambient diffeomorphism of $X$ taking $F_1$ to $F_2$. In Section~\ref{nonisomorphicexamples}, we prove that, in the case of BLFs, there are indeed examples striking all these features:

\begin{theorem} \label{infinite}
For every $g \geq 2$, there are closed oriented $4$-manifolds admitting infinitely many pairwise nonisomorphic relatively minimal genus-$g$ broken Lefschetz fibrations with isotopic regular fibers.   
\end{theorem}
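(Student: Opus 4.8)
The plan is to construct, for each $g \geq 2$, a single closed oriented $4$-manifold $X$ together with an infinite family of genus-$g$ broken Lefschetz fibrations $f_k : X \to S^2$, $k \in \N$, that are pairwise nonisomorphic yet share isotopic regular fibers. The natural source of such flexibility in the BLF world—unavailable for honest Lefschetz fibrations—is the round handle / indefinite fold locus, which allows one to modify the monodromy by \emph{round handle slides} and insertions that do not change the diffeomorphism type of the total space. Concretely, I would start from a fixed base BLF $f_0$ with Hurwitz cycle system $W_0 = (c; c_1, \ldots, c_n)$ and produce the family by repeatedly applying some monodromy modification that manifestly preserves the total space $X$ up to diffeomorphism and keeps the regular fiber fixed, while provably altering the isomorphism class of the fibration.

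First I would pin down the invariant that distinguishes the members of the family. Since by Theorem~\ref{bijectionthm} isomorphism classes of genus-$g$ BLFs correspond bijectively to Hurwitz cycle systems modulo Hurwitz equivalence, two fibrations are nonisomorphic exactly when their Hurwitz cycle systems are \emph{not} related by any sequence of elementary transformations and simultaneous actions. So the crux is to exhibit a numerical or homological quantity, computable from $W_f$, that is invariant under Hurwitz equivalence but takes infinitely many distinct values across the family $\{f_k\}$. A promising candidate is the number of Lefschetz critical points $n$ together with some finer data (for instance, a count of vanishing cycles in a fixed homology class, or a signature/Euler-characteristic-type count), chosen so that it changes monotonically as $k$ grows. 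The construction of the $f_k$ should therefore be arranged so that each step increases $n$ (or the chosen invariant) in a controlled way while leaving $X$ fixed; the standard device is to introduce a cancelling pair of Lefschetz singularities whose vanishing cycles are nullhomotopic, together with a compensating round-handle move—or, more cleanly, to use the broken structure to trade a Lefschetz handle against a round handle so that the smooth total space is unchanged.

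The key steps, in order, are: (1) fix the ambient $4$-manifold $X$ and an explicit starting BLF $f_0$ on it with a well-understood Hurwitz cycle system (e.g.\ one obtained from a simple genus-$g$ fibration by introducing a round handle); (2) describe the local monodromy modification $W_k = T^k(W_0)$ producing the infinite family, and verify by handle calculus that each $f_k$ is a \emph{relatively minimal} genus-$g$ BLF on the \emph{same} $X$, with all regular fibers isotopic in $X$—this uses the simplified-BLF handlebody description recalled in the introduction; (3) check relative minimality, i.e.\ that no vanishing cycle $c_i$ of $f_k$ bounds a disk, so that the fibrations are genuinely relatively minimal rather than blow-ups; and (4) compute the chosen Hurwitz-invariant for each $W_k$ and show it takes infinitely many values, whence by Theorem~\ref{bijectionthm} the $f_k$ are pairwise nonisomorphic.

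The main obstacle I anticipate is step (4), specifically proving that the invariant is genuinely unchanged under the \emph{full} set of Hurwitz moves for cycle systems—not just the ordinary Hurwitz moves for Lefschetz fibrations, but also the elementary transformations and simultaneous actions governing the distinguished cycle $c$ and the interaction with $\Ker(\Phi_c)$. One must ensure the distinguishing quantity is well-defined on equivalence classes, which requires showing invariance under both $(c; \ldots, c_i, c_{i+1}, \ldots) \mapsto (c; \ldots, c_{i+1}, t_{c_{i+1}}(c_i), \ldots)$ and $(c; c_1, \ldots, c_n) \mapsto (h(c); h(c_1), \ldots, h(c_n))$ simultaneously. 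A secondary difficulty is verifying that the total space truly does not change as $k$ varies: the round-handle bookkeeping must be done carefully, since an error there could collapse the family to finitely many diffeomorphism types of $X$ (which would be fine) but more dangerously could accidentally make the $f_k$ isomorphic after all. I would handle this by choosing the simplest possible invariant that survives all moves—most likely a count tied to the homology classes of vanishing cycles or to a fixed algebraic intersection pattern—and by making the modification local and explicit enough that the diffeomorphism $X \cong X$ between successive members is transparent from the handle picture.
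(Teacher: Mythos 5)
Your outline has the right architecture (produce a family, distinguish it by a Hurwitz-equivalence invariant, arrange a common total space and isotopic fibers), but the two ingredients you leave unspecified are exactly the ones that carry the whole proof, and the candidates you propose for them do not work. First, the distinguishing invariant: every quantity you name --- the number $n$ of Lefschetz critical points, an Euler-characteristic or signature count, homology classes of vanishing cycles, algebraic intersection patterns --- is useless here. For a fixed genus $g$ and fixed total space $X$, $n$ is determined by $\eu(X)=6-4g+n$, so it cannot vary in the family; and in any construction of this type the vanishing cycles will typically be forced to be homologically trivial (in the paper's examples they are separating curves bounding genus-one subsurfaces containing $c$), so all homological and algebraic-intersection data vanish identically. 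The paper's key device is Lemma~\ref{T:invariance I}: to a separating vanishing cycle $d$ bounding a torus containing $c$ one associates an arc $\Pi(d)$ between the two marked points of the surgered surface $\Sigma_c$, and the \emph{geometric} intersection number $I(d_1,d_2)=i(\Pi(d_1),\Pi(d_2))$ is shown to be invariant under elementary transformations (because $\Pi(t_{d_2}(d_1))=t_{\beta(d_2)}(\Pi(d_1))$ and one can apply $t_{\beta(d_2)}^{-1}$ to both arcs) and under simultaneous actions. Taking $d_2^n=t_d^n(d_2^0)$ gives $I(d_1,d_2^n)=4n+2$, which is what separates the fibrations. Nothing weaker than a geometric intersection count of this kind will do.

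Second, your mechanism for keeping the total space fixed is flawed: introducing cancelling pairs of Lefschetz singularities with null-homotopic vanishing cycles destroys relative minimality (a null-homotopic vanishing cycle produces an exceptional sphere in a fiber), so your steps (2) and (3) are in direct tension. The paper instead builds the fibrations $f_n$ on a priori different manifolds $X_n$, with Hurwitz cycle systems $(c;d_1,d_2^n)$ whose vanishing cycles are essential separating curves (hence relatively minimal), and then identifies the $X_n$ by a \emph{homotopy} of maps --- not an isomorphism of fibrations --- that pushes the two Lefschetz points across the round image into the lower side, where the vanishing cycles become null-homotopic because the bounded tori contain $c$; the resulting fibrations $\hat f_n$ are all isomorphic, yielding diffeomorphisms $\Phi_{n,m}\colon X_n\to X_m$ that carry fibers to fibers. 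This homotopy trick, which exploits that $d_1$ and $d_2^n$ are disjoint from $c$, is the idea your proposal is missing, and without it (or a genuine substitute) the claim that the family lives on one manifold with isotopic fibers is unsupported.
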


\noindent That is, we have a family of BLFs $(X, f_i)$, $i \in \N$, with regular fibers $F_i$, such that all $F_i$ as framed surfaces are smoothly isotopic in $X$. This demonstrates the rigidity of BLFs as opposed to their equivalence via regular homotopies (see Remark~\ref{cohomotopy}).

\vspace{0.2cm}
\section{Preliminaries} 

\vspace{0.2cm}
\subsection{Broken Lefschetz fibrations} \

Let $X$ and $B$ be smooth, compact, oriented, manifolds of dimension $4$ and $2$, respectively, and $f\colon\, X\to B$ a smooth map. The map $f$ is said to have a \emph{Lefschetz singularity} at a point $x \in Int(X)$, if around $x$ and $f(x)$ one can choose orientation preserving charts so that $f$ conforms the complex local model
\[(u, v) \mapsto u^2 + v^2 .\]
The map $f$ is said to have a \emph{round singularity} (or \emph{indefinite fold singularity}) along a $1$-manifold $Z \subset X$ if around every $z \in Z$, there are coordinates $(t, x_1, x_2, x_3)$ with $t$ a local coordinate on $Z$, in terms of which $f$ is given by
\[(t, x_1, x_2, x_3) \mapsto (t, x_1^2 + x_2^2 - x_3^2).\]
A \emph{broken Lefschetz fibration} is then defined as a surjective smooth map $f\colon\, X\to B$ which is submersion everywhere except for a finite set of points $C$ and a finite collection of disjoint embedded circles $Z \subset X \setminus C$, where it has Lefschetz singularities and round singularities, respectively. We call the $1$-manifold $Z$ the \emph{round locus} and its image $f(Z)$ the \emph{round image} of $f$. These fibrations were first introduced by Auroux, Donaldson and Katzarkov in \cite{ADK}. 

\emph{Simplified broken Lefschetz fibrations} constitute a subfamily of broken Lefschetz fibrations subject to the following additional conditions \cite{B1}: The base surface is $B=S^2$, the round image is connected (and possibly empty), the round image is embedded, the fibration is relatively minimal, and whenever $Z \neq \emptyset$, all the fibers are connected and all the Lefschetz singularities lie over the $2$-disk component of $S^2 \setminus f(Z)$ over which fibers have higher genera. Note that a (simplified) broken Lefschetz fibration with $Z = \emptyset$ is an honest Lefschetz fibration, thus they are natural generalizations of Lefschetz fibrations over the $2$-sphere. Importantly, in any homotopy class of a map from $X$ to $S^2$ there exists a representative which is a simplified BLF \cite{B4, W}, which demonstrates that these fibrations are found in abundance. As we have mentioned in the Introduction, with no further notice, we will assume hereon that all the broken Lefschetz fibrations we work with are simplified, and still call them BLFs.

\vspace{0.2cm}
\subsection{Mapping class groups}\label{notation_MCG} \

Let $\Sigma$ be a compact, oriented and connected surface, $c\subset \Sigma$ a simple closed curve and $V_i$, $i=1, \ldots, n$, be a finite collection of points on $\Sigma$.  We define a group $\Map{(\Sigma; V_1,\ldots,V_n)}(c)$ as follows: 
\[\Map{(\Sigma; V_1,\ldots, V_n)}(c) = \left\{[T]\in \pi_0(\Diff^+{(\Sigma; V_1,\ldots, V_n)}) \hspace{.3em} | \hspace{.3em} T(c)=c\right\}, 
\]
where $\Diff^+{(\Sigma;V_1,\ldots, V_n)}$ consists of orientation-preserving diffeomorphisms of $\Sigma$ fixing $\Pa \Sigma$ pointwise and each $V_i$ setwise. 
In this paper, we define a group structure on the above group by the composition as maps, that is, for elements $T_1,T_2\in\Diff^+(\Sigma; V_1,\ldots, V_n)$, we define the product $T_1\cdot T_2$ as follows: 
	\begin{equation*}
	T_1\cdot T_2 = T_1\circ T_2. 
	\end{equation*}
We define a group structure of $\Map(\Sigma; V_1, \ldots, V_n)(c)$ in the same way.

\vspace{0.2in}
\section{Monodromy representations of broken Lefschetz fibrations} \label{uniqueness}

For any genus-$g$ Lefschetz fibration $f : X \to S^2$ we obtain a \textit{monodromy representation} (or \textit{monodromy factorization} 
\[ t_{c_n} \cdot \ldots \cdot t_{c_1} = 1 , \]
in $\Map(\Sigma_g)$. Such a factorization can also be encoded by an ordered tuple of curves $(c_1,\ldots,c_n)$ on $\Sigma_g$, called the Hurwitz system $W_f$ for $f$.  A classical result of Earle and Eells states that the connected components of the diffeomorphism group $\Diff(\Sigma_g)$ is contractible \cite{EE}. So, for $g \geq 2$, one can recover a genus-$g$ Lefschetz fibration from a factorization of the identity into positive Dehn twists in $\Map(\Sigma_g)$. By the works of Kas and Matsumoto \cite{Kas, Matsumoto}, one then obtains a one-to-one correspondence between genus $g \geq 2$ Lefschetz fibrations, up to isomorphisms, and monodromy factorizations, up to global conjugation of all elements by mapping classes in $\Map(\Sigma_g)$ and Hurwitz moves ---as we will review below.  The purpose of this section is to extend this very useful combinatorial correspondence to BLFs.

\vspace{0.2cm}
\subsection{Round cobordism and capping homomorphism} \

The round cobordism, containing the $1$-dimensional indefinite fold, provides a \textit{fibered cobordism} between a $\Sigma_g$-bundle and a $\Sigma_{g-1}$-bundle over $S^1$. This is essentially what differentiates a BLF from a genuine Lefschetz fibration, and thus deserves a careful study within our translation to mapping class groups. The first ingredient we need here is the homomorphism $\Phi_c$ which will relate the mapping class group of $\Sigma_g$ with that of $\Sigma_{g-1}$.

Let $c\subset \Sigma$ be a non-separating simple closed curve. For a given element \linebreak $\psi\in \Map{(\Sigma)}(c)$, we take a representative $T:\Sigma\rightarrow \Sigma\in \Diff^+{(\Sigma)}$ preserving the curve $c$ setwise.
The restriction $T|_{\Sigma\setminus c}: \Sigma\setminus c\rightarrow \Sigma\setminus c$ is also a diffeomorphism. 
Let $\Sigma_c$ be the surface obtained by attaching two disks to $\Sigma\setminus c$ along $c$. 
The diffeomorphism $T|_{\Sigma\setminus c}$ can be extended to a diffeomorphism $\tilde{T}: \Sigma_c\rightarrow \Sigma_c$. 
We define $\widetilde{\Phi_c}([T])\in \Map{(\Sigma_c; \{p_1,p_2\})}$ as the isotopy class of $\tilde{T}$, where $p_1,p_2$ are the origins of the attached disks. 

\begin{lemma}[Also see \cite{Behrens}]\label{lem_welldefinedness}

The map $\widetilde{\Phi_c} : \Map(\Sigma)(c) \rightarrow \Map(\Sigma_c; \{p_1, p_2\})$ is a well-defined, surjective homomorphism, and the kernel of this map is an infinite cyclic group generated by $t_{c}$. 

\end{lemma}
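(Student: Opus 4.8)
The plan is to realize $\widetilde{\Phi_c}$ concretely by cutting $\Sigma$ along $c$ and capping the two resulting boundary circles with once-marked disks, and then to read off all four assertions from the well-understood behaviour of mapping class groups under such capping. Throughout I fix an oriented annular neighbourhood $A\cong c\times[-1,1]$ of $c$ and write $S$ for the genus $g-1$ surface with boundary $\partial_1\sqcup\partial_2$ obtained by cutting $\Sigma$ along $c$, so that $\Sigma_c$ is $S$ with $\partial_1,\partial_2$ capped by disks centred at $p_1,p_2$.

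First I would settle the independence statements hidden in the construction. Given $[T]\in\Map(\Sigma)(c)$, I choose a representative $T$ with $T(c)=c$ and, after an isotopy through $c$-preserving diffeomorphisms, arrange $T$ to carry $A$ to $A$ by a standard model. The extension of $T|_{\Sigma\setminus c}$ over each capping disk exists because $T$ preserves the boundary circles, and any two extensions fixing the centre of the disk differ by an element of $\Diff^+(D^2,\partial D^2)$, which is connected by the Alexander--Smale lemma; hence the class $[\tilde T]\in\Map(\Sigma_c;\{p_1,p_2\})$ is independent of the chosen extension. The homomorphism property is then formal: taking the extension of $(T_1\circ T_2)|_{\Sigma\setminus c}$ to be $\tilde T_1\circ\tilde T_2$ gives $\widetilde{\Phi_c}([T_1][T_2])=\widetilde{\Phi_c}([T_1])\,\widetilde{\Phi_c}([T_2])$. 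For surjectivity I take any class in $\Map(\Sigma_c;\{p_1,p_2\})$, represent it by a diffeomorphism that is standard (a rotation in a fixed chart) near $p_1,p_2$, delete the two disks, and reglue along $c$ to produce a $c$-preserving diffeomorphism of $\Sigma$ mapping onto the given class.

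Finally the kernel, which is where the real content lies. That $\langle t_c\rangle\subseteq\Ker\widetilde{\Phi_c}$ is immediate: $t_c$ is supported in $A$, so after cutting and capping it becomes supported near $p_1,p_2$ and extends to the identity there; and since $c$ is non-separating, $t_c$ has infinite order, giving $\langle t_c\rangle\cong\Z$. For the reverse inclusion I would factor the construction through the capping exact sequence $1\to\langle t_{\partial_1}\rangle\times\langle t_{\partial_2}\rangle\to\Map(S)\to\Map(\Sigma_c;\{p_1,p_2\})\to 1$ (Farb--Margalit), whose target is the index-two subgroup fixing $p_1,p_2$ individually and whose kernel is generated by the two boundary Dehn twists. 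An element of $\Ker\widetilde{\Phi_c}$ may be represented by a diffeomorphism fixing $c$ pointwise and standard on $A$; cutting along $c$ produces a class in $\Map(S)$ that, by the hypothesis $\widetilde{\Phi_c}([T])=1$, lies in $\langle t_{\partial_1},t_{\partial_2}\rangle$. Regluing $\partial_1$ to $\partial_2$ then carries each boundary twist to $t_c$, so $[T]$ is a power of $t_c$, establishing $\Ker\widetilde{\Phi_c}=\langle t_c\rangle$.

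I expect the main obstacle to be making the cut-and-reglue passage between $\Map(\Sigma)(c)$ and $\Map(S)$ precise: one must control the twisting ambiguity along $c$---equivalently, the monodromy of loops in the space of curves isotopic to $c$, accessible through the Earle--Eells contractibility of the components of $\Diff^+(\Sigma)$ for $g\ge 2$---and check that this ambiguity is exactly the subgroup $\langle t_c\rangle$ that the capping sequence already kills. This is also what guarantees that $\widetilde{\Phi_c}$ is independent of the chosen $c$-preserving representative, so that well-definedness on $\pi_0$ and the kernel computation turn out to be two faces of the same argument.
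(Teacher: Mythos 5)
Your kernel computation is, at its core, the paper's own argument: both proofs run the two Farb--Margalit homomorphisms out of the mapping class group of the cut surface $S=\Sigma\setminus\nu c$ --- the regluing map $\eta\colon\Map(S)\to\Map(\Sigma)(c^{\text{ori.}})$ with kernel $\langle t_{\delta_1}t_{\delta_2}^{-1}\rangle$ and the capping map to $\Map(\Sigma_c;p_1,p_2)$ with kernel $\langle t_{\delta_1},t_{\delta_2}\rangle$ --- and chase the resulting triangle to see that $\Ker\widetilde{\Phi_c}$ is the image of $\langle t_{\delta_1},t_{\delta_2}\rangle$ under regluing, namely $\langle t_c\rangle$. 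Two organizational differences are worth noting. First, the paper isolates the classes that reverse the orientation of $c$ (equivalently swap the sides of $c$, hence swap $p_1$ and $p_2$) into a pair of split short exact sequences with $\Z/2\Z$ quotient, split by a hyperelliptic involution, and only runs the cut-and-cap argument on the orientation-preserving index-two subgroups; you instead handle surjectivity by a direct cut-and-reglue of a representative that is standard near the marked points, and observe (implicitly) that kernel elements cannot swap $p_1,p_2$ and so land in the orientation-preserving part. That works, but be careful with your displayed capping sequence: its target should be $\Map(\Sigma_c;p_1,p_2)$, not $\Map(\Sigma_c;\{p_1,p_2\})$, exactly because capping a surface whose diffeomorphisms fix the boundary pointwise cannot produce point-swapping classes --- you say this in words but the displayed sequence contradicts it.

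Second, and more substantively: the step you flag as ``the main obstacle'' --- showing that $[\tilde T]$ is independent of the $c$-preserving representative $T$, which requires controlling isotopies of $\Sigma$ that move $c$ off itself --- is a genuine issue for your geometric definition, and you do not resolve it (you only point at Earle--Eells and the simple connectivity of the component of the space of circles isotopic to $c$, the tool the paper itself invokes elsewhere for $g\geq 2$). The paper's proof dissolves this problem by reversing the order of construction: since $\Ker(\eta)=\langle t_{\delta_1}t_{\delta_2}^{-1}\rangle$ is contained in $\Ker(Cap)=\langle t_{\delta_1},t_{\delta_2}\rangle$ and $\eta$ is surjective, the capping homomorphism descends through $\eta$ to a homomorphism on $\Map(\Sigma)(c^{\text{ori.}})$ that is well defined for purely algebraic reasons, and one then checks it agrees with the geometric recipe. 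If you adopt that order, your well-definedness worry and your kernel computation become, as you suspected, the same diagram chase, and the proof closes.
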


\begin{proof}

We denote by $\Map(\Sigma)(c^{\text{ori.}})$ be a subgroup of $\Map(\Sigma)(c)$ whose element is represented by a map preserving an orientation of $c$. 
There exists the following exact sequence: 
\[
1 \rightarrow \Map(\Sigma)(c^\text{ori.}) \hookrightarrow \Map(\Sigma)(c) \xrightarrow{\varepsilon} \mathbb{Z}/2\mathbb{Z} \rightarrow 1, 
\]
where $\varepsilon : \Map(\Sigma)(c)\rightarrow \mathbb{Z}/2\mathbb{Z}$ is a homomorphism defined as follows: 
\begin{equation*}
\varepsilon(\varphi) = \begin{cases}
0 & \text{(if $\varphi$ is represented by a map preserving an orientation of $c$)}, \\
1 & \text{(otherwise)}.
\end{cases} 
\end{equation*} 
Furthermore, this sequence is split by a hyperelliptic involution $\iota$ of $\Sigma$ preserving the curve $c$. 
In particular, we can obtain the following isomorphism: 
\begin{equation*}\label{E:isomMCG1}
\Theta_1: \Map(\Sigma)(c) \to \Map(\Sigma)(c^\text{ori.}) \rtimes \mathbb{Z}/2\mathbb{Z}. 
\end{equation*}
There also exists the following exact sequence: 
\[
1 \rightarrow \Map(\Sigma_c; p_1,p_2) \hookrightarrow \Map(\Sigma_c; \{p_1,p_2\}) \xrightarrow{\varepsilon^\prime} \mathbb{Z}/2\mathbb{Z}\rightarrow 1, 
\]
where the value $\varepsilon^\prime(\varphi)$ is $0$ if $\varphi$ is contained in $\Map(\Sigma_c, p_1,p_2)$ and $1$ otherwise. 
We take a small regular neighborhood $\nu c$ of $c$ in $\Sigma$. 
This sequence is also split by $\iota^\prime$, which is an involution of $S_c$ induced by $\iota$. 
Thus we obtain the following isomorphism:  
\begin{equation*}\label{E:isomMCG2}
\Theta_2:\Map(\Sigma_c; \{p_1,p_2\}) \to \Map(\Sigma_c; p_1,p_2) \rtimes \mathbb{Z}/2\mathbb{Z}. 
\end{equation*}

Let $\eta: \Map(\Sigma\setminus \nu c) \rightarrow \Map(\Sigma)(c^\text{ori.})$ be a homomorphism induced by the inclusion. 
This map is surjective since every element in $\Map(\Sigma)(c^\text{ori.})$ is represented by a map preserving $\nu c$ pointwise. 
The kernel of $\eta$ is an infinite cyclic group generated by $t_{\delta_1} \cdot {t_{\delta_2}}^{-1}$, where $\delta_1, \delta_2\subset \Sigma\setminus \nu c$ are simple closed curves parallel to the boundary components (see \cite[Theorem 3.18]{FM}).  
We denote by $Cap: \Map(\Sigma\setminus \nu c) \rightarrow \Map(\Sigma_c; p_1,p_2)$ the capping homomorphism. 
This map is surjective and the kernel of it is generated by $t_{\delta_1}, t_{\delta_2}$ (cf.~\cite[Proposition 3.19]{FM}). 
Thus we can take a homomorphism $\widetilde{\Phi_c}^\text{ori.}: \Map(\Sigma)(c^\text{ori.}) \rightarrow \Map(\Sigma_c; p_1,p_2)$ which makes the following diagram commute: 
\begin{equation*}
\xymatrix{
\Map(\Sigma\setminus \nu c) \ar[r]^\eta \ar[d]_{Cap} & \Map(\Sigma)(c^\text{ori.}) \ar[dl]^{\widetilde{\Phi_c}^\text{ori.}} \\
\Map(\Sigma_c; p_1,p_2) & . 
}
\end{equation*}
Moreover, it is easy to see that the kernel $\Ker(\widetilde{\Phi_c}^\text{ori.})$ is an infinite cyclic group generated by $t_c$.

It follows from the definitions that $\widetilde{\Phi_c} = \Theta_2^{-1}\circ (\widetilde{\Phi_c}^\text{ori.} \times \text{id})\circ \Theta_1$.  
The map $\widetilde{\Phi_c}$ is surjective since the map $\widetilde{\Phi_c}^\text{ori.}$ is surjective. 
The kernel of $\widetilde{\Phi_c}$ is isomorphic to that of $\widetilde{\Phi_c}^\text{ori.}$ (via $\Theta_1$), which is generated by $t_c$. 
This completes the proof of Lemma \ref{lem_welldefinedness}. 
\end{proof}

Let $F_{p_1, p_2} : \Map(\Sigma_c; \{p_1,p_2\}) \rightarrow \Map(\Sigma_c)$ be the forgetful homomorphism. 
We define a homomorphism $\Phi_c$ as follows: 
\[
\Phi_c := F_{p_1,p_2} \circ \widetilde{\Phi_c} : \Map(\Sigma)(c) \rightarrow \Map(\Sigma_c). 
\]
It is well-known that the kernel $\Ker(F_{p_1,p_2})$ is generated by point pushing maps along elements in $\pi_1(\Sigma_c\setminus \{p_i\}, p_j)$ ($i\neq j$) and disk twists along arcs between $p_1$ and $p_2$ (See \cite{FM}, for example). 
Note that a lift of such a disk twist under $\widetilde{\Phi_c}$ is equal to a {\it $\Delta$-twist} $\Delta_{c, \beta} = (t_c\cdot t_{\beta})^3$ up to $t_c$, where $\beta$ is a simple closed curve in $\Sigma$ which intersects $c$ at a unique point transversely (the name of $\Delta$-twist was first used in \cite{Behrens}). 
Combining this observation with Lemma \ref{lem_welldefinedness}, we obtain:  

\begin{lemma}\label{lem_generator_kernel}

The group $\Ker(\Phi_c)$ is generated by lifts of point pushing maps along $\pi_1(\Sigma_c\setminus \{p_i\}, p_j)$ ($i\neq j$) under $\widetilde{\Phi_c}$, a $\Delta$-twist $\Delta_{c, \beta}$, and the Dehn twist $t_c$. 

\end{lemma}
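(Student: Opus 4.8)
The plan is to exploit the factorization $\Phi_c = F_{p_1,p_2} \circ \widetilde{\Phi_c}$ directly. Since a mapping class $g \in \Map(\Sigma)(c)$ lies in $\Ker(\Phi_c)$ precisely when $\widetilde{\Phi_c}(g) \in \Ker(F_{p_1,p_2})$, we have the identity $\Ker(\Phi_c) = \widetilde{\Phi_c}^{-1}(\Ker(F_{p_1,p_2}))$. The problem thus reduces to computing a generating set for the preimage of a subgroup under the surjective homomorphism $\widetilde{\Phi_c}$.

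For this I would invoke the elementary group-theoretic principle: if $\phi : G \to H$ is a surjective homomorphism and $N \le H$ a subgroup, then $\phi^{-1}(N)$ is generated by $\Ker(\phi)$ together with any choice of lifts of a generating set of $N$. Indeed, given $g$ with $\phi(g) \in N$, one writes $\phi(g)$ as a word in the generators of $N$, replaces each generator by a chosen lift, and observes that $g$ times the inverse of the resulting product lies in $\Ker(\phi)$. Applying this with $G = \Map(\Sigma)(c)$, $H = \Map(\Sigma_c; \{p_1, p_2\})$, $\phi = \widetilde{\Phi_c}$, and $N = \Ker(F_{p_1,p_2})$, and using Lemma~\ref{lem_welldefinedness} to identify $\Ker(\widetilde{\Phi_c}) = \langle t_c \rangle$, we conclude that $\Ker(\Phi_c)$ is generated by $t_c$ together with lifts of the generators of $\Ker(F_{p_1,p_2})$.

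It remains to translate these lifts into the desired generators. By the description of $\Ker(F_{p_1,p_2})$ recalled just above the statement, its generators are the point pushing maps along $\pi_1(\Sigma_c \setminus \{p_i\}, p_j)$ ($i \neq j$) and the disk twists along arcs between $p_1$ and $p_2$. Lifting the former under $\widetilde{\Phi_c}$ yields exactly the point-pushing lifts appearing in the statement. For the latter, the observation preceding the lemma shows that a lift of any disk twist equals a $\Delta$-twist $\Delta_{c,\beta}$ modulo a power of $t_c$; since $t_c$ is already among our generators, we may discard this ambiguity and retain only the $\Delta$-twist itself.

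The one point requiring care, and where I expect the main (albeit modest) work to lie, is reducing the family of disk twists to a single $\Delta$-twist, as asserted by the singular phrasing ``a $\Delta$-twist $\Delta_{c,\beta}$'' in the statement. The key is that any two disk twists along arcs between $p_1$ and $p_2$ differ by a point pushing map: the product of one with the inverse of another fixes both marked points and is supported in a neighborhood of the union of the two arcs, hence is realized by pushing a point around the loop they form. Lifting this relation through $\widetilde{\Phi_c}$, every $\Delta$-twist lift is accounted for by one fixed $\Delta$-twist together with the point-pushing lifts and $t_c$, which yields precisely the stated generating set.
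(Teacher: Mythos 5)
Your proof is correct and follows essentially the same route as the paper, which obtains the lemma directly from the factorization $\Phi_c = F_{p_1,p_2}\circ\widetilde{\Phi_c}$, the stated generating set of $\Ker(F_{p_1,p_2})$, the observation that a lift of a disk twist is a $\Delta$-twist up to $t_c$, and Lemma~\ref{lem_welldefinedness}; the paper offers no argument beyond combining these facts. Your final paragraph, reducing the whole family of disk twists to a single $\Delta_{c,\beta}$ modulo point pushes and $t_c$, supplies a detail the paper leaves implicit, and it is sound since the ratio of two disk twists fixes each marked point and dies in $\Map(\Sigma_c)$, hence lies in the point-pushing subgroup.
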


\vspace{0.2cm}
\subsection{Monodromy representation and Hurwitz equivalence for BLFs} \

We assume that $B=D^2$ and we put $f(\mathcal{C}_f)=\{y_1,\ldots,y_n\}$. 
We take embedded paths $\alpha_1,\ldots,\alpha_n$ in $D^2$ satisfying the following conditions: 

\begin{itemize}

\item each $\alpha_i$ connects $y_0$ to $y_i$, 

\item if $i\neq j$, then $\alpha_i\cap\alpha_j=\{y_0\}$, 

\item $\alpha_1,\ldots,\alpha_n$ appear in this order when we travel counterclockwise around $y_0$. 

\end{itemize}

\noindent
The paths $\alpha_1,\ldots, \alpha_n$ give vanishing cycles $c_1,\ldots,c_n \subset \Sigma_g$ of Lefschetz singularities of $f$. 

Let $f:X\rightarrow S^2$ be a BLF with $Z_f\neq \emptyset$, $X_h$ be the higher side of $f$, and $X_r$ be the round cobordism of $f$. 
The restriction $f|_{X_h}$ is an LF over $D^2$. 
As explained above, we can obtain vanishing cycles of $c_1,\ldots,c_n$ by taking paths $\alpha_1,\ldots, \alpha_n$ for $f|_{X_h}$. 
We further take a path $\alpha\subset f(X_h\cup X_r)$ from $y_0$ to a point on the image of indefinite folds of $f$ so that $\alpha$ intersects each of other paths $\alpha_1,\ldots, \alpha_n$ only at $y_0$, and that $\alpha, \alpha_1, \ldots, \alpha_n$ appear in this order when we go around $y_0$ counterclockwise. 
We call a system of paths $\alpha, \alpha_1,\ldots, \alpha_n$ satisfying the condition above a {\it Hurwitz path system} of $f$. 
The path $\alpha$ gives a vanishing cycle $c\subset \Sigma_g$ of indefinite folds.  
We denote by $W_f$ a sequence of vanishing cycles $(c; c_1,\ldots,c_n)$. 

\begin{lemma}[Also see \cite{B1}]\label{lem_relation_BLF_MCG}

Let $f:X\rightarrow S^2$ be a genus-$g$ broken Lefschetz fibration, and $W_f= (c; c_1,\ldots,c_n)$ be a sequence of vanishing cycles of $f$ obtained as above. 
The product $t_{c_n}\cdot \cdots \cdot t_{c_1}$ is contained in the kernel $\Ker(\Phi_c)$, where $t_{c_i}$ is the right-handed Dehn twist along $c_i$. 

\end{lemma}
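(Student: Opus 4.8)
The plan is to read off both conditions ---that $\mu := t_{c_n}\cdot \cdots \cdot t_{c_1}$ lies in $\Map(\Sigma_g)(c)$ and that $\Phi_c(\mu)=1$--- directly from the geometry of the decomposition $X = X_h \cup X_r \cup X_l$ into the higher side, the round cobordism, and the lower side $X_l$; here the standing assumption $Z_f\neq\emptyset$ under which $W_f=(c;c_1,\ldots,c_n)$ was constructed is exactly what supplies the distinguished first cycle $c$. First I would recall that $f|_{X_h}$ is an honest Lefschetz fibration over the disk $D_h := f(X_h)$ with ordered vanishing cycles $c_1,\ldots,c_n$ read off along the paths $\alpha_1,\ldots,\alpha_n$, so that the monodromy of the $\Sigma_g$-bundle over $\partial D_h$ is precisely $\mu$. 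The whole point is then to track what the round cobordism does to this monodromy as one crosses $f(Z)$.

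Next I would analyze $X_r = f^{-1}(A)$ over an annular neighborhood $A \cong S^1\times[-1,1]$ of the round image $f(Z)$, using the local model $(t,x_1,x_2,x_3)\mapsto(t, x_1^2+x_2^2-x_3^2)$ of the indefinite fold. Over the higher boundary circle the fiber is $\Sigma_g$ and over the lower one it is $\Sigma_{g-1}$, and passing from the former to the latter amounts to a fiberwise compression along the fold vanishing cycle $c$: one cuts $\Sigma_g$ along $c$ and caps the two resulting boundary circles with disks whose centers $p_1,p_2$ are the feet of the round handle. For this family of compressions to close up consistently over the circle $f(Z)$, the parallel transport around the inner boundary of $A$ ---which is exactly $\mu$--- must carry $c$ to itself setwise, so that $\mu\in\Map(\Sigma_g)(c)$ and $\mu(c)=\pm c$; this is the first assertion. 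I would then identify the induced operation on the monodromy with the homomorphisms of the previous subsection: capping off $c$ realizes $\widetilde{\Phi_c}$, and since the lower fiber is an honest $\Sigma_{g-1}$ with no distinguished points, the monodromy of the $\Sigma_{g-1}$-bundle over the lower boundary circle is obtained by further forgetting $p_1,p_2$, i.e.\ it equals $F_{p_1,p_2}(\widetilde{\Phi_c}(\mu)) = \Phi_c(\mu)$.

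Finally I would invoke the simplified hypothesis that all Lefschetz singularities lie over $D_h$: the lower side $X_l = f^{-1}(D_l)$ is then a genuine $\Sigma_{g-1}$-bundle over the disk $D_l$, hence trivial, so its monodromy around $\partial D_l$ is the identity. Since that monodromy was just computed to be $\Phi_c(\mu)$, we conclude $\Phi_c(\mu)=1$, that is $\mu\in\Ker(\Phi_c)$, as claimed. One should not expect the stronger statement $\widetilde{\Phi_c}(\mu)=1$: the feet $p_1,p_2$ need not be fixed, and indeed $\widetilde{\Phi_c}(\mu)$ may be a nontrivial point-pushing map or a $\Delta$-twist, in agreement with the description of $\Ker(\Phi_c)$ in Lemma \ref{lem_generator_kernel}.

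The main obstacle is the middle step: making rigorous the claim that crossing the round cobordism enacts precisely the capping-and-forgetting homomorphism $\Phi_c$, and not merely up to some unaccounted ambiguity. Concretely, one must match the framed round-handle attaching data along $f(Z)$ with the disk-capping in the definition of $\widetilde{\Phi_c}$ from Lemma \ref{lem_welldefinedness}, verify that the two feet of the handle are exactly the marked points $p_1,p_2$, and check that no extra twisting beyond what is recorded by the point-pushings, the $\Delta$-twist, and $t_c$ is introduced. Everything else ---the Lefschetz monodromy computation on $X_h$ and the triviality of the bundle on $X_l$--- is standard.
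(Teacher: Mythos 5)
Your proposal is correct and follows essentially the same route as the paper: read the product $t_{c_n}\cdots t_{c_1}$ as the monodromy of the higher side, observe that crossing the round cobordism realizes the capping homomorphism $\Phi_c$, and conclude from the triviality of the $\Sigma_{g-1}$-bundle on the lower side that $\Phi_c(\mu)=1$. The ``main obstacle'' you flag---rigorously matching the passage through the fold with $\Phi_c$---is exactly what the paper supplies via the standard model for a neighborhood of the round locus (Lemma~\ref{T:model_fold}) together with a horizontal distribution chosen to be the orthogonal complement of $\Ker(d\mu)$ in that model, so that parallel transport along circles just above and just below the round image represents $\mu$ and $\Phi_c(\mu)$ respectively.
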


To give a proof of Lemma~\ref{lem_relation_BLF_MCG} we need the following: 

\begin{lemma}\label{T:model_fold}

Let $Z\subset X$ be the set of indefinite folds of $f$. 
There exist diffeomorphisms $\Psi: I \times B^1 \times B^2/(1,x,y_1,y_2)\sim (0, \pm x,y_1,\pm y_2) \rightarrow \nu(Z)$ and \linebreak $\psi:I\times B^1/(1,X)\sim (0,X) \to \nu f(Z)$ such that the following diagram commutes: 
\[
\begin{CD}
I \times B^1 \times B^2/\sim @> \Psi >> \nu(Z)\\
@V \mu VV @VV f V \\
I \times B^1/ \sim @>\psi>> \nu f(Z),  
\end{CD}
\]
where $B^i$ is an open ball of dimension $i$ and $\mu$ is defined by 
\[ \mu (t,x,y_1,y_2) = (t, -x^2+ {y_1}^2+{y_2}^2) \, . \] 
\end{lemma}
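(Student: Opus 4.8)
\noindent\emph{Proof proposal.} The plan is to globalize the pointwise fold normal form along the circle $Z$ and to read off the gluing monodromy from the normal bundle data. Recall that in our setting $Z$ is a single embedded circle, and by the definition of a round singularity each $z \in Z$ admits coordinates in which $f$ reads $(t, x, y_1, y_2) \mapsto (t, -x^2 + y_1^2 + y_2^2)$; equivalently, on a slice transverse to $Z$ the map $f$ restricts to a Morse function with a single nondegenerate critical point of index $1$. First I would fix an auxiliary Riemannian metric on $X$ and pass to the rank-$3$ normal bundle $N(Z)$, on whose fibers the Hessian of $f$ in the directions normal to $Z$ defines a smooth, nondegenerate, symmetric bilinear form of signature $(2,1)$. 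Since the signature is constant along $Z$, the negative- and positive-definite eigenspaces vary smoothly, yielding an orthogonal splitting $N(Z) = L_- \oplus L_+$ into a line bundle $L_-$ (the $x$-direction, where the form is negative) and a plane bundle $L_+$ (the $y_1, y_2$-directions, where it is positive).

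Next I would classify these bundles over $S^1 \cong Z$. A real line bundle over $S^1$ is either trivial or the M\"obius bundle, and a rank-$2$ real bundle over $S^1$ is likewise determined by an element of $\pi_0(O(2)) \cong \Z/2$; in the nonorientable case its monodromy can be arranged to be the reflection $(y_1, y_2) \mapsto (y_1, -y_2)$. The crucial point is that $X$ is oriented: an orientation of $N(Z)$, together with the fixed orientation of $TZ$, must be preserved by the monodromy around $Z$, which forces the sign on $L_-$ to coincide with the determinantal sign on $L_+$. Writing this common sign as $\pm$, I obtain trivializations of $L_-$ and $L_+$ over the cut-open interval $I$ whose end-to-end monodromy is exactly $(x, y_1, y_2) \mapsto (\pm x, y_1, \pm y_2)$, matching the identification defining $I \times B^1 \times B^2/\!\sim$.

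With the linear normal data in hand I would upgrade to the nonlinear statement via a parametrized Morse lemma along $I$: the family of transverse Morse functions, all of index $1$, admits a smooth $t$-family of coordinate changes (a Moser/Milnor-type argument) carrying $f$ fiberwise to $-x^2 + y_1^2 + y_2^2$ on $I \times B^1 \times B^2$. Since these coordinates are aligned with the trivializations above, the two ends of $I$ are glued precisely by $(x, y_1, y_2) \mapsto (\pm x, y_1, \pm y_2)$, producing the diffeomorphism $\Psi$ onto $\nu(Z)$. For the base I would use that the embedded circle $f(Z)$ in the orientable surface $S^2$ is two-sided and hence has an annular neighborhood $\nu f(Z) \cong I \times B^1/(1,X) \sim (0,X)$, giving $\psi$; because the gluing $(x,y_1,y_2) \mapsto (\pm x, y_1, \pm y_2)$ leaves $-x^2 + y_1^2 + y_2^2$ invariant, the model map $\mu$ descends to the quotient and the square $f \circ \Psi = \psi \circ \mu$ commutes by construction.

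The main obstacle I expect is the parametrized normal form over the circle together with its bookkeeping: one must execute the family Morse lemma smoothly in the parameter $t$ while keeping the fiber coordinates aligned with the bundle trivializations, and then correctly match the resulting boundary identification with the prescribed $\pm$ gluing. The conceptual heart is the orientation argument that collapses the two independent signs into a single $\pm$; the analytic heart is ensuring that the Moser-type isotopy depends smoothly on $t \in I$ and is compatible with the end identification, so that the local models patch into a genuine diffeomorphism $\Psi$ onto the tubular neighborhood rather than merely a fiberwise homeomorphism.
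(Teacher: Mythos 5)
Your proposal is correct and follows essentially the same route as the paper's sketch: both fiber $\nu(Z)$ over the circle, split the three-dimensional kernel (your normal bundle $N(Z)$, the paper's $\Ker(dF)|_{Z}$) into the negative and positive eigenbundles of the intrinsic Hessian, read off the single gluing sign from orientability (the paper's footnote; your observation that orientability of $X$ forces $w_1(L_-)=w_1(L_+)$), and then apply a parametrized Morse lemma to obtain the quadratic model fiberwise. The one step you flag but do not resolve --- matching the family normal-form coordinates at the two ends of $I$ with the prescribed linear gluing --- is exactly where the paper invokes J\"anich's theorem on symmetries of singularities, via the fact that the stabilizer of the indefinite Morse germ has maximal compact subgroup $\{(A,B)\in\mathrm{O}(2)\times\mathrm{O}(1)\mid\det A\det B=1\}$.
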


\begin{proof}[Proof of Lemma~\ref{T:model_fold} (Sketch)]
First take a diffeomorphism $\psi: (I\times \R)/\sim \, \to \nu f(Z)$. 
We denote by $F$ the composition $\pi\circ \psi^{-1}\circ f$, where $\pi$ is the projection onto $I/\sim$. 
It is easy to see that $F$ is a locally trivial fibration. 
We denote the restriction $f|_{F^{-1}(t)}$ by $f_t$. 
We take a coordinate neighborhood $(U,\varphi)$ of $p_0\in Z\cap F^{-1}(0)$ (and retake a diffeomorphism $\psi$ if necessary) so that 
\begin{itemize}

\item for any $t$, the set $\varphi^{-1}(\{(t,x,y)~|~ (x,y)\in \R^3\})$ is in $F^{-1}(t)$, and

\item $f\circ \varphi^{-1}(t,x,y_1,y_2) = (t,-x^2 + y_1^2 + y_2^2)$ on $\varphi^{-1}(U)$. 

\end{itemize}
For any $t\in I$, we take vector fields $V^-(t), V^+_i(t)$ on $F^{-1}(t)\cap \nu Z$ so that
\begin{itemize}

\item $V^-(t)$ and $V^+_i(t)$ depend smoothly on $t$, 

\item $\{V^-(t)_p, V^+_1(t)_p, V^+_2(t)_p\}$ spans $\Ker(d(f_t)_p)$, where $p\in Z\cap F^{-1}(t)$, 

\item $\delta(V^-(t)_p,V^-(t)_p)$ is negative, while $\delta(V^+_i(t)_p,V^+_i(t)_p)$ is positive, where $\delta: \Sym(\Ker d(f_t)_p)\to \R\left(\frac{\Pa}{\Pa X}\right)$ is the intrinsic derivative of $f_t$, 

\item $V^-(\epsilon)$ and $V^+_i(\epsilon)$ coincide with $d\varphi \left(\frac{\Pa}{\Pa x}\right)$ and $d\varphi \left(\frac{\Pa}{\Pa y_i}\right)$ for sufficiently small $\epsilon$, respectively, and  

\item $V^-(1-\epsilon)$, $V^+_1(1-\epsilon)$ and $V^+_2(1-\epsilon)$ coincide with $\pm d\varphi \left(\frac{\Pa}{\Pa x}\right)$, $d\varphi \left(\frac{\Pa}{\Pa y_1}\right)$ and $\pm d\varphi \left(\frac{\Pa}{\Pa y_1}\right)$ for sufficiently small $\epsilon$
\footnote{The signs here are positive if a line subbundle of $\Ker(dF)|_{Z}$ consisting of subspaces on which $\delta$ is negative definite is orientable and negative otherwise.}
. 

\end{itemize}
Making the vector fields above sufficiently small, we define a map 
\[\Psi: I \times B^1 \times B^2/(1,x,y_1,y_2)\sim (0, \pm x,y_1,\pm y_2) \rightarrow \nu(Z) \] 
as follows: 
\[
\Psi(t,x,y_1,y_2) = C_{x V^-(t)+ y_1V^+_1(t) + y_2V^+_2(t)}(1), 
\]
where $C_{x V^-(t)+ y_1V^+_1(t) + y_2V^+_2(t)}$ is an integral curve of $x V^-(t)+ y_1V^+_1(t) + y_2V^+_2(t)$ with initial point in $Z\cap F^{-1}(t)$. 
It is easy to see that this map is a diffeomorphism provided that $V^-$ and $V^+_i$ are sufficiently small. 
A maximal compact subgroup of the stabilizer of the $3$-dimensional indefinite Morse function germ in $\mathcal{R}$ is isomorphic to $\{(A,B)\in \mathrm{O}(2)\times \mathrm{O}(1)~|~ \det(A)\det(B) =1\}$. 
Thus, using the algorithm in the proof of the Morse Lemma and \cite[Theorem 3]{Janich}, we can obtain the desired diffeomorphisms.  
\end{proof}

\begin{proof}[Proof of Lemma~\ref{lem_relation_BLF_MCG}]
Let $\Psi: I \times B^1 \times B^2/\sim \rightarrow \nu(Z)$ and $\psi:I\times B^1/\sim \to \nu f(Z)$ be diffeomorphisms which satisfy the condition in Lemma~\ref{T:model_fold}. 
Assume that the path $\alpha$ in a Hurwitz path system giving the cycles $c$ coincides with $\psi(\{0\}\times [0,1])$ is the image of $\psi$. 
We take a horizontal distribution $\mathtt{H}$ of the restriction $f|_{X\setminus \Crit(f)}$ so that it is equal to $d\Psi(\mathtt{H}_0)$ on the image $\Psi(I \times B^1_{1/2}\times B^2_{1/2}/\sim)$, where $\mathtt{H}_0$ is the orthogonal complement of $\Ker(d\mu)$ with respect to the Euclidean metric and $B^i_{1/2}$ is an open ball with radius $\frac{1}{2}$. 
Since $t_{c_n}\cdot \cdots \cdot t_{c_1}$ is the global monodromy of the higher side of $f$, this product is represented by the parallel transport of $\mathtt{H}$ along $\psi(I\times \left\{\varepsilon\right\}/\sim)$ for small $\varepsilon >0$, in particular it is contained in $\Map(\Sigma_g)(c)$. 
Furthermore, it is easy to see that the image $\Phi_c(t_{c_n}\cdot \cdots \cdot t_{c_1})$ is represented by the parallel transport of $\mathtt{H}$ along $\psi(I\times \left\{-\varepsilon\right\}/\sim)$, which is the identity since the preimage $f^{-1}(\psi(I\times \left\{-\varepsilon\right\}/\sim))$ bounds the lower side. 
\end{proof}

\begin{definition}

A system $(c; c_1,\ldots, c_n)$ of simple closed curves is called a {\it Hurwitz cycle system} if it satisfies the condition $t_{c_n}\cdot \cdots \cdot t_{c_1}\in \Ker(\Phi_c)$. 
By Lemma \ref{lem_relation_BLF_MCG}, the system $W_f$ derived from a BLF $f$, together with a Hurwitz path system of it, is a Hurwitz cycle system. 
We call this system a {\it Hurwitz cycle system of $f$}. 

\end{definition}

There are two types of modifications of Hurwitz cycle systems. 
The first one, which we will refer to as an {\it elementary transformation}, is as follows: 
\[
(c; c_1,\ldots ,c_i,c_{i+1}, \ldots  , c_n) \longrightarrow (c; c_1,\ldots, c_{i+1}, t_{c_{i+1}}(c_{i}), \ldots, c_n). 
\]
It is easy to see that this modification can be realized by replacing a Hurwitz path system as described in the left side of Figure~\ref{F:Huwitzpath1}. 
The second modification, {\it simultaneous action by $h\in \Map(\Sigma_g)$}, is as follows. 
\[
(c; c_1,\ldots, c_n) \longrightarrow (h(c); h(c_1),\ldots, h(c_n)). 
\]
This modification corresponds to substitution of an identification of the reference fiber with $\Sigma_g$. 

\begin{figure}[htbp]
\centering
\subfigure[]{\includegraphics[height=22mm]{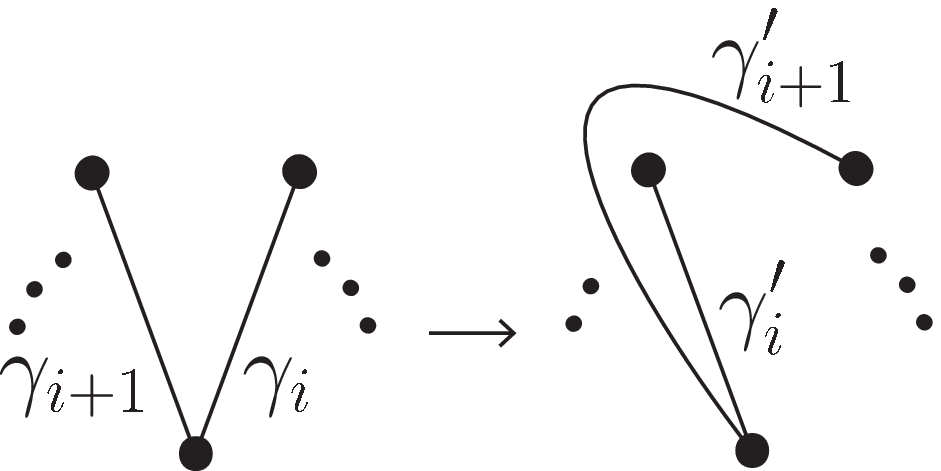}
\label{F:Huwitzpath1}}
\hspace{1em}
\subfigure[]{\includegraphics[height=22mm]{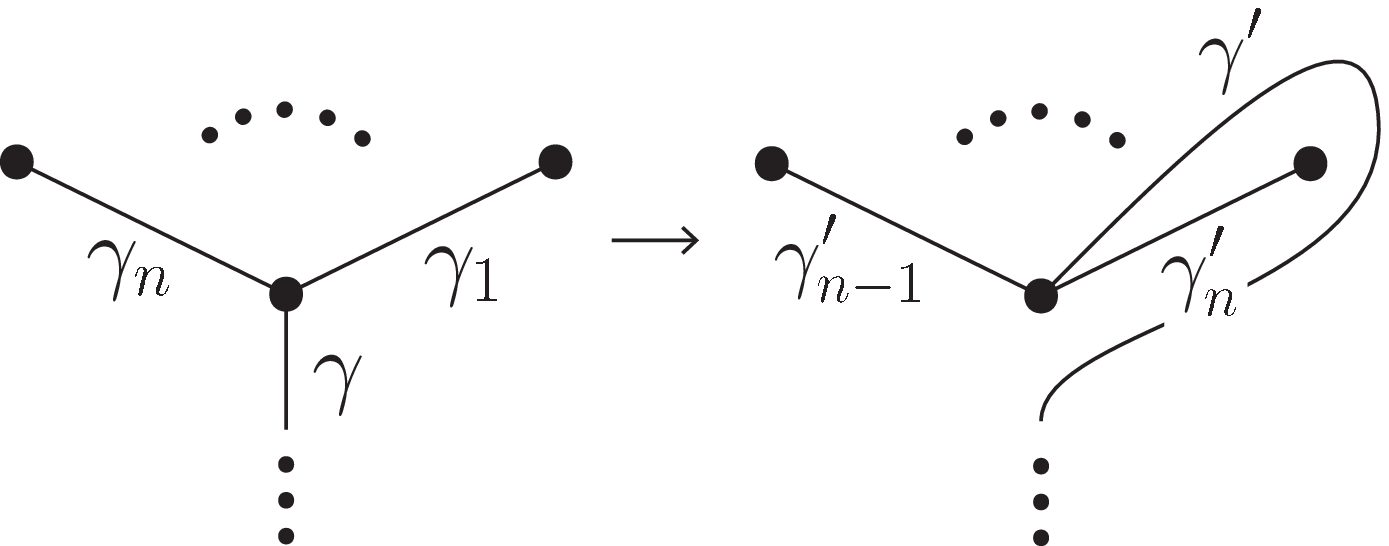}
\label{F:Huwitzpath2}}
\caption{Modifications of Hurwitz path systems. }
\label{modification_path_system}
\end{figure}

\begin{definition}
Two Hurwitz cycle systems are said to be {\it equivalent} if one can be obtained from the other by successive application of elementary transformations, simultaneous actions, and their inverse. 
\end{definition}

\begin{remark}

There is another modification of a Hurwitz cycle system described as follows: 
\[
(c; c_1,\ldots, c_n) \longrightarrow (t_{c_1}(c); c_2,\ldots, c_n, c_1). 
\]
It is easy to verify that this modification is induced by the modification of a Hurwitz path system described in the right side of Figure~\ref{F:Huwitzpath2}. 
Furthermore, this modification can be realized by simultaneous action by $t_{c_1}$, followed by successive application of inverse of elementary transformations. 
This modification will play a key role in the proof of the theorem below. 

\end{remark}

\vspace{0.2cm}
\subsection{Uniqueness of BLFs up to Hurwitz equivalence} \

The following equivalence relation for BLFs naturally extends the notion of isomorphisms of Lefschetz fibrations:
\begin{definition}
Two fibrations $f_1: X_1\rightarrow B_1$ and $f_2: X_2\rightarrow B_2$ are said to be {\it isomorphic} if there exist diffeomorphisms $\Phi: X_1\rightarrow X_2$ and $\varphi: B_1\rightarrow B_2$ satisfying the condition $f_2\circ \Phi = \varphi\circ f_1$. 
\end{definition}

\begin{theorem}\label{thm_isom_equiv}

Let $f_i: X_i \rightarrow S^2$ be a BLF with genus $g\geq 3$ ($i=1,2$). 
The fibrations $f_1$ and $f_2$ are isomorphic if and only if the corresponding Hurwitz cycle systems $W_{f_1}$ and $W_{f_2}$ are equivalent. 

\end{theorem}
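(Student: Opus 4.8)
The plan is to establish the two directions of the equivalence separately, following the template of the Kas--Matsumoto theorem for honest Lefschetz fibrations but paying careful attention to the extra data carried by the distinguished first entry $c$ and the round cobordism $X_r$.

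For the \emph{``only if''} direction, suppose $\Phi\colon X_1\to X_2$ and $\varphi\colon S^2\to S^2$ realize an isomorphism $f_2\circ\Phi=\varphi\circ f_1$. First I would observe that $\Phi$ must carry the round locus $Z_{f_1}$ to $Z_{f_2}$ and the higher side $X_{h,1}$ to $X_{h,2}$, since these are intrinsically characterized by the fibration (the round image is the fold locus in the base, and the higher side is the disk component with higher-genus fibers). Restricting to the higher sides, $\Phi|_{X_{h,1}}$ is an isomorphism of Lefschetz fibrations over $D^2$, so the classical argument produces an identification of reference fibers realizing a simultaneous action, after which the two ordered tuples of Lefschetz vanishing cycles $(c_1,\dots,c_n)$ agree up to Hurwitz moves, i.e.\ elementary transformations. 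What remains is to show that $\varphi$ and $\Phi$ also match up the distinguished cycle $c$ coming from the indefinite fold. Here I would use the model neighborhood $\nu(Z)$ from Lemma~\ref{T:model_fold}: since $\Phi$ respects the fold structure, it carries a Hurwitz path $\alpha$ for $f_1$ to a path isotopic (rel endpoints, through the path system) to a Hurwitz path for $f_2$, and the vanishing cycle $c$ of the fold is determined by this path. Any discrepancy in the cyclic placement of $\alpha$ relative to the $\alpha_i$ is absorbed by the third modification recorded in the Remark, $(c;c_1,\dots,c_n)\mapsto(t_{c_1}(c);c_2,\dots,c_n,c_1)$, which was shown to be a composite of a simultaneous action and inverse elementary transformations. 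Thus $W_{f_1}$ and $W_{f_2}$ are equivalent.

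For the \emph{``if''} direction, I would show that each generating move on Hurwitz cycle systems is realized by an isomorphism of the associated BLFs, and that the BLF is reconstructible from its Hurwitz cycle system up to isomorphism. The reconstruction is the crux: given a Hurwitz cycle system $(c;c_1,\dots,c_n)$ satisfying $t_{c_n}\cdots t_{c_1}\in\Ker(\Phi_c)$, one builds the higher side $X_h$ as the Lefschetz fibration over $D^2$ prescribed by $(c_1,\dots,c_n)$, which is well-defined up to isomorphism by Kas--Matsumoto together with the contractibility of $\Diff(\Sigma_g)$ components (Earle--Eells), and then attaches the round cobordism $X_r$ along the fold whose vanishing cycle is $c$. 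The condition $t_{c_n}\cdots t_{c_1}\in\Ker(\Phi_c)$ guarantees precisely that the monodromy of $X_h$ descends to the identity on the lower side (as shown in the proof of Lemma~\ref{lem_relation_BLF_MCG}), so the lower side closes up to a trivial $\Sigma_{g-1}$-bundle over the complementary disk; capping it off yields the closed BLF over $S^2$. That elementary transformations and simultaneous actions preserve the isomorphism class follows because each corresponds to a change of Hurwitz path system or reference-fiber identification, which does not alter the total space.

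The step I expect to be the main obstacle is controlling the \emph{uniqueness of the gluing of the round cobordism} in the reconstruction, and correspondingly showing in the forward direction that an abstract isomorphism respects the fold data up to exactly the listed moves. The difficulty is that, unlike the Lefschetz vanishing cycles which rigidly determine $X_h$, the round handle attachment involves a choice of framing and an identification of the outgoing $\Sigma_g$-boundary along the curve $c$; I would need Lemma~\ref{lem_welldefinedness} and Lemma~\ref{lem_generator_kernel} to pin down that the ambiguity in this gluing is governed exactly by $\Ker(\Phi_c)$ and hence is already accounted for by the equivalence relation, together with the normal-form diffeomorphism of Lemma~\ref{T:model_fold} to guarantee that any two round cobordisms with the same fold vanishing cycle $c$ are fiber-preservingly diffeomorphic. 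The hypothesis $g\geq 3$ enters here to ensure the relevant mapping class group statements (e.g.\ the structure of $\Ker(\Phi_c)$ and the triviality of the lower-genus bundle's monodromy) behave as required.
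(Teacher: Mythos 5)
Your overall architecture is right, and your ``only if'' direction is essentially the paper's argument: push the Hurwitz path system forward by $\varphi$, reduce to the independence of the cycle system from the choice of path system, and absorb the placement of the fold path $\alpha$ by the cyclic move. The gap is in the step you yourself flag as the crux of the ``if'' direction. You propose to control the gluing of the round cobordism and the lower side by appealing to Lemma~\ref{lem_welldefinedness} and Lemma~\ref{lem_generator_kernel}, i.e.\ to the algebraic structure of $\Ker(\Phi_c)$. But $\Ker(\Phi_c)$ only governs \emph{whether} the lower side closes up (the existence condition of Lemma~\ref{lem_relation_BLF_MCG}); it says nothing about the \emph{uniqueness} of the gluing, and neither lemma is used in the actual proof. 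The ambiguities that must be killed are homotopy-theoretic, not group-theoretic: (i) extending the fiber-preserving diffeomorphism $\Phi_h$ of the higher sides across the round cobordisms requires choosing horizontal distributions whose parallel transports carry the circle's worth of fold vanishing cycles of $f_1$ to that of $f_2$ compatibly over all of $f(Z)$, and this uses the fact that the connected components of the space of embedded circles in $\Sigma_g$ are simply connected for $g\geq 2$; (ii) extending the resulting fiber-preserving diffeomorphism $\partial X^{(l)}_1\to\partial X^{(l)}_2$ over the trivial lower sides requires the contractibility of the components of $\Diff^+(\Sigma_{g-1})$, i.e.\ Earle--Eells in genus $g-1\geq 2$. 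Point (ii) is the actual (and essentially only) place where $g\geq 3$ enters --- not ``the structure of $\Ker(\Phi_c)$'' or ``the triviality of the lower-genus bundle's monodromy,'' which holds for all $g$ once $t_{c_n}\cdots t_{c_1}\in\Ker(\Phi_c)$. The genus-$1$ counterexamples mentioned after the theorem statement show that a purely monodromy-theoretic argument of the kind you sketch cannot succeed: those fibrations have identical Hurwitz cycle systems, so whatever distinguishes their total spaces is invisible to $\Ker(\Phi_c)$.

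Concretely, to repair the argument you should replace the appeal to Lemmas~\ref{lem_welldefinedness} and \ref{lem_generator_kernel} by the following: standardize neighborhoods of $Z_1$ and $Z_2$ using the normal form of Lemma~\ref{T:model_fold}; choose horizontal distributions agreeing with the model near $Z_i$ and arranged, via the simple connectivity of the relevant component of the circle space, so that parallel transport from $\partial X^{(h)}_i$ into the model neighborhood intertwines $\Phi_h$ with the model identification for every point of $f(Z)$ simultaneously; this produces a fiber-preserving diffeomorphism of the round cobordisms whose restriction to the lower boundary then extends over the trivial $\Sigma_{g-1}$-bundles by Earle--Eells. Your ``reconstruction'' framing versus the paper's ``direct comparison'' of two given fibrations is an inessential difference, but without these two topological inputs the uniqueness of the glued-up $4$-manifold is not established.
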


\begin{remark}
The statement o the theorem does not hold if the $g \geq 3$ assumption is dropped. There exist infinitely many genus-$1$ BLFs wit the same Hurwitz cycle systems but with pairwise homotopy inequivalent total spaces \cite{BK, H1}.
\end{remark}
We can now prove our first main theorem:
\begin{proof}[Proof of Theorem \ref{thm_isom_equiv}]

We first prove the ``only if'' part. 
Suppose that $f_1$ and $f_2$ are isomorphic, and we fix diffeomorphisms $\Phi: X_1\rightarrow X_2$ and $\varphi: S^2\rightarrow S^2$ satisfying the condition in the definition. 
We take a Hurwitz path system $\gamma, \gamma_1,\ldots, \gamma_n$ of the fibration $f_1$. 
We denote by $W_{f_1}$ the corresponding Hurwitz cycle system of $f_1$ derived from this system, together with an identification $\phi: f_1^{-1}(y_0) \rightarrow \Sigma_g$. 
The system $\varphi(\gamma), \varphi(\gamma_1),\ldots, \varphi(\gamma_n)$ is a Hurwitz path system of $f_2$, and the map $\phi\circ \Phi^{-1}:f_2^{-1}(\varphi(y_0))\rightarrow \Sigma_g$ gives an identification. 
We can obtain a Hurwitz cycle system $W_{f_2}$ of the fibration $f_2$ from them. 
It is easy to verify that $W_{f_1}$ is equal to $W_{f_2}$. 
Thus, all we need to prove is a Hurwitz cycle system of $f_1$ derived from a different Hurwitz path system $\gamma^\prime, \gamma_1^\prime, \ldots, \gamma_n^\prime$ is equivalent to $W_{f_1}$. By a similar argument in \cite{GS} (the solution of Exercise 8.2.7(c)), we see that the system $\gamma^\prime, \gamma_1^\prime, \ldots, \gamma_n^\prime$ can be changed into the system $\gamma, \gamma_1,\ldots, \gamma_n$ up to isotopy by successive application of the two moves in Figure \ref{modification_path_system}. This completes the proof of the ``only if'' part. 

We next prove the ``if'' part. 
By the assumption, we can take Hurwitz path systems of $f_1$ and $f_2$, and identifications of reference fibers with the surface $\Sigma_g$ so that the corresponding Hurwitz cycle systems $W_{f_1}$ and $W_{f_2}$ coincide. 
We decompose $X_i$ into the three parts $X_i^{(h)}, X_i^{(r)}$ and $X_i^{(l)}$, that is, the higher side, the round cobordism and the lower side of $f_i$. 
The restriction $f_i|_{X_i^{(h)}}$ is a Lefschetz fibration. 
By \cite[Theorem 2.4]{Matsumoto}, the fibrations $f_1|_{X_1^{(h)}}$ and $f_2|_{X_2^{(h)}}$ are isomorphic. 
In particular, we can take a fiber-preserving diffeomorphism $\Phi_h: X_1^{(h)} \rightarrow X_2^{(h)}$. 
Since there are no singularities on the boundary of the higher side, we can take a diffeomorphism $\Theta_1:\partial X_1^{(h)}\to T(\varphi)$, where $\varphi: \Sigma_g \rightarrow \Sigma_g$ is a diffeomorphism and $T(\varphi)$ is the mapping torus $I\times \Sigma_g / (1, x) \sim (0, \varphi(x))$. 
We denote the composition $\Theta_1\circ \Phi_h\circ \Theta_1^{-1}$ by $\Theta_2$.

Let $c\subset \Sigma_g$ be the vanishing cycle of indefinite folds in $W_{f_1}$ (note that $c$ is also in $W_{f_2}$). 
By Lemma \ref{lem_relation_BLF_MCG} the isotopy class of $\varphi$ is contained in $\Map(\Sigma_g)(c)$. 
In particular we can assume that $\varphi$ preserves a regular neighborhood $\nu(c)$. 
We denote the set of indefinite folds of $f_i$ by $Z_i \subset X_i$. 
For each $i=1,2$ we take a diffeomorphism $\psi_i:f_i(X_i^{(r)})\to I \times D^1 / (1, t)\sim (0, t)$ so that the composition $f_i\circ \Theta_i^{-1}: T(\varphi) \rightarrow (I \times \{1\})/\sim$ becomes the projection, and that the image $\psi_i(Z_i)$ is the circle $I\times \{0\}/\sim$. 
In the same way as in the proof of Lemma~\ref{T:model_fold} we can take a diffeomorphism 
$\Psi_i: \nu(Z_i) \to I \times D_\varepsilon^1 \times D_\varepsilon^2/(1,x,y_1,y_2)\sim (0, \pm x,y_1,\pm y_2)$
which makes the following diagram commute: 
\[
\begin{CD}
\nu(Z_i) @> \Psi_i >> I \times D^1_\varepsilon \times D^2_\varepsilon/\sim\\
@V f_i VV @VV \mu V \\
f_i(X_i^{(r)}) @>\psi_i>> I \times D^1/\sim,  
\end{CD}
\]
where $D^i_\varepsilon$ is the $i$-dimensional disk with radius $\varepsilon$ sufficiently small and $\mu$ is the map in Lemma~\ref{T:model_fold}. 
For a positive number $s \leq 2$, we define a path $\gamma_{t,s}:[0,s] \rightarrow I\times D^1/\sim$ as $\gamma_{t,s}(x) = (t,1-x)$. 
A connected component of the set $Sub(S^1, \Sigma_g)$ of circles in $\Sigma_g$ is simply connected if $g\geq 2$ (see \cite[Theorem 2.7.H]{Ivanov_2002_MCG} for example).
Thus, we can take a horizontal distribution $\mathtt{H}_i$ of $f_i|_{X_i\setminus Z_i}$ so that it satisfies the following conditions: 
\begin{enumerate}

\item $\mathtt{H}_i$ is equal to the orthogonal complement of $\Ker(d\mu)$ with respect to the Euclidean metric in $\Psi_i^{-1}(I\times D_{2\varepsilon/3}^1\times D_{2\varepsilon/3}^2/\sim)$, 

\item the composition $\Phi_1\circ PT^{\mathtt{H}_1}_{\gamma_{t,1-\frac{\varepsilon^2}{4}}}\circ \Theta_1^{-1}|_{\{t\}\times \nu c}$ sends $\{t\} \times \nu(c)$ to the set 
\[
\pi^{-1}\left(t,\frac{\varepsilon^2}{4}\right) \cap \left\{(t,x,y_1,y_2) \in I\times D_{2\varepsilon/3}^1\times D_{2\varepsilon/3}^2/\sim ~\left|~ |x| \leq \frac{\varepsilon}{2}\right.\right\}, 
\]
where $PT^{\mathtt{H}_i}_{\gamma_{t,1-\frac{\varepsilon^2}{4}}}$ is the parallel transport of $\mathtt{H}_i$ along $\gamma_{t, 1-\frac{\varepsilon^2}{4}}$, and

\item $\Phi_1\circ PT^{\mathtt{H}_1}_{\gamma_{t,1-\frac{\varepsilon^2}{4}}}\circ \Theta_1^{-1}|_{\{t\}\times \nu c}$ is equal to $\Phi_2\circ PT^{\mathtt{H}_2}_{\gamma_{t,1-\frac{\varepsilon^2}{4}}}\circ \Theta_2^{-1}|_{\{t\}\times \nu c}$. 

\end{enumerate}

\noindent
We can define a diffeomorphism $\widehat{\Phi}_r: f_1^{-1}\psi_1^{-1}(I\times (-\varepsilon/3,1]) \rightarrow f_2^{-1}\psi_2^{-1}(I\times (-\varepsilon/3,1])$ as follows: 
\begin{equation*}
\widehat{\Phi}_r(w) = \begin{cases}
\Psi_2^{-1}(t,x,y_1,y_2)  & (w = \Psi_1^{-1}(t,x,y_1,y_2) \in \nu(Z_1)\cap I \times D_{2{\varepsilon}/{3}}^1\times D_{2{\varepsilon}/{3}}^2/\sim), \\
PT^{\mathtt{H}_2}_{\gamma_{t,s}}\Theta_2^{-1}(z) & (w= PT^{\mathtt{H}_1}_{\gamma_{t,s}}\Theta_1^{-1}(z)\in X_1^{(r)}). 
\end{cases}
\end{equation*}
It is easy to see that this map is well-defined and preserves fibers. 
Furthermore, we can extend $\widehat{\Phi}_r$ to a fiber-preserving diffeomorphism from $X_1^{(r)}$ to $X_2^{(r)}$. 
We denote the resulting diffeomorphism by $\Phi_r$. 

The restriction $\Phi_r: \partial X_1^{(l)} \rightarrow \partial X_2^{(l)}$ is a fiber-preserving diffeomorphism. 
Since the connected component of the group $\Diff^+(\Sigma_{g-1})$ is contractible if $g\geq 3$, this restriction can be extended to a fiber-preserving diffeomorphism $\Phi_l: X_l^{(1)}\rightarrow X_l^{(2)}$. 
Combing the three diffeomorphisms $\Phi_h, \Phi_r$ and $\Phi_l$, we can obtain the desired map $\Phi$. 
This completes the proof of Theorem \ref{thm_isom_equiv}. 
\end{proof}

\vspace{0.2in}
\section{Broken Lefschetz fibrations with special monodromies} \label{special}

The monodromy $(c; c_1,\ldots, c_n)$ of a BLF is said to be \textit{contained} in the subgroup $N < \Map(\Sigma_g)$, if we can have all the Dehn twists $t_{c_i}$ lie in $N$ after conjugating with the same element in $\Map(\Sigma_g)$. Here we study two subfamilies of BLFs, whose monodromies are contained in the hyperelliptic mapping class group $\h(\Sigma_g)$ and the Torelli group $\I(\Sigma_g)$, respectively. We will then turn back to isomorphisms of BLFs, and for all $g \geq 2$, we will produce an infinite family of examples of nonisomorphic genus-$g$ BLFs with the same regular fiber in the ambient $4$-manifold.

\vspace{0.2cm}
\subsection{Hyperelliptic broken Lefschetz fibrations} \label{hyperellipticBLFs} \

As demonstrated by the work of the second author with Sato in \cite{H1, H2}, main topological results for hyperelliptic Lefschetz fibrations can be extended to BLFs. Let  $f: X \to S^2$ be a genus-$g$ BLF with Hurwitz cycle system $W_f=(c; c_1,\ldots, c_n)$. We say that $f$ is a \textit{hyperelliptic broken Lefschetz fibration} if, possibly after conjugating the curves of $W_f$ by a mapping class in $\Map(\Sigma_g)$, we can assume that the monodromy is contained in $\h(\Sigma_g)$ and $c$ is fixed by the hyperelliptic involution.\footnote{Since $t_c$ commutes with the hyperelliptic involution if and only if $c$ is a symmetric curve with respect to the involution, in this case, one can simply ask all $c, c_1, \ldots, c_n$ to be fixed by a hyperelliptic involution on $\Sigma_g$).} 

Here is the full version of Theorem~\ref{hyperellipticthm} we quoted in the Introduction:

\begin{theorem}[{\cite[Theorem 1.1]{HS1}}]\label{T:involution HBLF}
Let $f:X\to S^2$ be a hyperelliptic BLF with genus $g\geq 3$. We denote by $C\subset X$ the set of Lefschetz singularities of $f$ with separating vanishing cycles and by $\tilde{X}$ the manifold obtained by blowing up $X$ at the points in $C$. 
\begin{enumerate}
\item 
There exists an involution $\eta$ of $\tilde{X}$ such that $\eta$ is the covering transformation of a double branched covering over the manifold $S\sharp 2\left|C\right|\CPb$, where $S$ is an $S^2$-bundle over $S^2$. 
\item 
The homology class of a regular fiber of $f$ is nontrivial in $H_2(X; \Q)$. 
\end{enumerate}

\end{theorem}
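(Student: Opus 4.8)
\textbf{Proof plan for Theorem~\ref{T:involution HBLF}.}

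The plan is to build the involution $\eta$ piece by piece along the decomposition of $\tilde X$ into the higher side, the round cobordism, and the lower side, following the same strategy that works for hyperelliptic Lefschetz fibrations but paying special attention to the round locus. First I would address the higher side. Since $f$ is hyperelliptic, the monodromy factorization $t_{c_n}\cdot\cdots\cdot t_{c_1}$ lives in $\h(\Sigma_g)$, and each $c_i$ may be taken symmetric with respect to a fixed hyperelliptic involution $\iota$ on $\Sigma_g$. On a single Lefschetz neighborhood the involution $\iota$ extends fiberwise, and by the standard Siebert--Tian/Fuller argument the quotient of the higher side by this fiberwise involution is a $D^2$-family of spheres (a disk-bundle piece of an $S^2$-bundle over $S^2$), with the Lefschetz critical points descending to branch points; those with separating vanishing cycles are precisely the ones requiring the blow-ups, which is why one passes from $X$ to $\tilde X$ and records the extra $2|C|\CPb$ summands. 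I would make this precise by invoking the hyperelliptic structure to get a fiberwise involution on each fiber that is compatible across the Hurwitz system, using Theorem~\ref{thm_isom_equiv} to ensure that the global gluing data is consistent.

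Next I would treat the round cobordism $X^{(r)}$. Here the key point is that the vanishing cycle $c$ of the indefinite fold is itself required (by the definition of hyperelliptic BLF) to be fixed by $\iota$, and $t_c\in\h(\Sigma_g)$. Using the explicit local model from Lemma~\ref{T:model_fold} for a neighborhood $\nu(Z)$ of the round locus, I would exhibit an involution on this model compatible with $\iota$ on the higher boundary: the fold model $(t,x,y_1,y_2)\mapsto(t,-x^2+y_1^2+y_2^2)$ carries an obvious involution commuting with $\mu$, and since $c$ is symmetric I can arrange the fiberwise involution on the $\Sigma_g$-side to match the restriction of this model involution. The symmetry of $c$ guarantees that as the fold collapses $c$ to a point, the involution descends to an involution of the round cobordism of the base $S^2$-bundle $S$, matching the branched-cover structure. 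This produces the genus drop $g\mapsto g-1$ downstairs compatibly with the branched double cover.

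Then I would handle the lower side $X^{(l)}$, which is a trivial $\Sigma_{g-1}$-bundle over a disk. The induced hyperelliptic involution on $\Sigma_{g-1}$ extends as a product involution, and since by the previous step the boundary data already agrees, one can glue; I would use the contractibility of the relevant diffeomorphism group component (as in the proof of Theorem~\ref{thm_isom_equiv}, valid for $g\geq 3$) to extend the fiber-preserving involution over the whole lower side. Combining the three involutions yields $\eta$ on $\tilde X$, and identifying the quotient as $S\sharp 2|C|\CPb$ with $S$ an $S^2$-bundle over $S^2$ completes part (1).

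For part (2), I would argue that the homology class $[F]$ of a regular fiber is nonzero in $H_2(X;\Q)$. The cleanest route is to use the branched-cover description: the fiber $F$ is a double branched cover of a sphere fiber of $S$, and the preimage of a section (or of a fiber-transverse sphere) of $S$ meets $F$ in a positive number of points, giving a homology class with nonzero intersection number $[F]\cdot[\text{section}]>0$, which forces $[F]\neq 0$ in $H_2(X;\Q)$. Alternatively, since the regular fiber has positive genus and the fibration is relatively minimal, one can rule out $[F]=0$ by a self-intersection/adjunction argument on the near-symplectic structure carried by the BLF. I would make sure the blow-up bookkeeping is harmless here, since the blow-up divisors are exceptional spheres orthogonal to the pulled-back fiber class.

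The main obstacle I anticipate is the compatibility of the fiberwise involution across the round cobordism: unlike the higher side, where the hyperelliptic argument is classical, one must verify that the symmetric vanishing cycle $c$ of the indefinite fold allows the hyperelliptic involution on $\Sigma_g$ to descend coherently to the hyperelliptic involution on $\Sigma_{g-1}$ through the fold, so that the two branched-cover structures glue without monodromy obstruction. This is exactly the place where the hypothesis that $c$ be fixed by the involution is indispensable, and where the local normal form of Lemma~\ref{T:model_fold} must be matched carefully to an equivariant model. Since the detailed construction is carried out in \cite{HS1}, I would structure the argument so that the higher-side and lower-side steps reduce to known hyperelliptic Lefschetz fibration results and concentrate the genuinely new work on this equivariant round-cobordism gluing.
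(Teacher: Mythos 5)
This theorem is imported verbatim from \cite{HS1} and the paper gives no proof of it, so there is nothing internal to compare your argument against line by line; the only guidance the text offers is the discussion following the statement (the branch locus consists of exceptional spheres plus a multisection that folds over the round image, the fixed-point set can be nonorientable, and the $g\geq 3$ hypothesis is needed precisely because of possibly twisted gluings of the low-genus lower side). Measured against that, your three-piece construction of $\eta$ --- classical hyperelliptic argument on the higher side, an equivariant version of the local model of Lemma~\ref{T:model_fold} over the round cobordism using the symmetry of $c$, and an extension over the trivial $\Sigma_{g-1}$-bundle using contractibility of the components of $\Diff^+(\Sigma_{g-1})$ for $g-1\geq 2$ --- is a faithful reconstruction of the strategy of \cite{HS1}, and you correctly locate both where the hypothesis on $c$ and where the genus bound enter.

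Two soft spots in part (2). First, your fallback argument (``rule out $[F]=0$ by a self-intersection/adjunction argument on the near-symplectic structure carried by the BLF'') is circular in the logic of this paper: as Remark~\ref{hyperNS} records, the near-symplectic structure is \emph{obtained} from conclusion (2) via the Thurston--Gompf construction of \cite{ADK}, since that construction requires $[F]\neq 0$ in $H_2(X;\Q)$ as input. Second, your primary argument via the preimage of a section of $S$ needs care: the natural dual surface suggested by the branched-cover picture is the fixed-point set of $\eta$ (the lift of the branched multisection), and the paper explicitly warns that this surface may be nonorientable, so it does not directly carry a class in $H_2(\tilde X;\Q)$; a mod $2$ intersection number does not suffice to exclude that $[F]$ is torsion. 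The clean route is in the other direction: $p_\ast[F]=2[\Sigma]$ for $p$ the double branched covering and $[\Sigma]$ the sphere-fiber class of $S$, which is visibly nonzero in $H_2(S\sharp 2|C|\CPb;\Q)$, whence $[F]\neq 0$ in $H_2(\tilde X;\Q)$ and then in $H_2(X;\Q)$ since the regular fiber is disjoint from the blown-up points. With that adjustment your outline is sound.
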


Analogous to the case of honest Lefschetz fibrations, the branched locus consists of exceptional spheres contained among higher side fibers and a simple branched multisection ---but the latter now might have folds along the round locus of the fibration.

Unlike Lefschetz fibrations, the fixed point set of a hyperelliptic BLF often has nonorientable surface components; in fact, in a neighborhood of indefinite folds the fixed point set is the M\"obius band if the corresponding round handle is twisted and is an annulus otherwise, and further, the fixed point set still may be nonorientable even if the round handle is untwisted \cite{HS1}.

\begin{remark}
We shall note that the assumption on the genus of a fibration cannot be dropped from Theorem~\ref{T:involution HBLF}, as one can have twisted gluings of the trivial genus $0$ or $1$ bundle on the lower side to the rest of the fibration. Indeed, there exists a hyperelliptic genus-$1$ BLF on $S^4$ (\cite[Example 1]{ADK}), and neither one of the conclusions of the theorem can hold in this case.
\end{remark}

\begin{remark} \label{hyperNS}
The second conclusion of Theorem~\ref{T:involution HBLF} implies that $X$ is near-symplectic (so $b^+(X)>0$), by the Thurston-Gompf type construction of \cite{ADK}. Another consequence is the non-existence of a genus $g \geq 3$ hyperelliptic BLF on a definite $4$-manifold, such as connected sums of $\CP$s.  
\end{remark}

As in the case of Lefschetz fibrations, one can easily read off several algebraic topological invariants of the total space of a BLF using the simple handlebody decomposition associated to it \cite{B1}. The signature calculation, which in principle is determined by the monodromy factorization, in general is a lot harder than that of say the fundamental group or the Euler characteristic. Nevertheless, extending Endo's work in \cite{Endo}, which greatly simplifies this calculation for hyperelliptic Lefschetz fibrations, in \cite{HS2} the authors showed that there is an easy algorithm for calculating the signature of the total space of a hyperelliptic BLF from a give monodromy factorization \cite{HS2}.

\vspace{0.2cm}
\subsection{Torelli broken Lefschetz fibrations}  \label{TorelliBLFs} \

Let $f: X \to S^2$ be a genus-$g$ BLF with Hurwitz cycle system $W_f=(c; c_1,\ldots, c_n)$. We say that $f$ is a \textit{Torelli broken Lefschetz fibration} if, possibly after conjugating the curves of $W_f$ by a mapping class in $\Map(\Sigma_g)$, we can assume that the monodromy is contained in $\I(\Sigma_g)$. Unlike in the definition of hyperelliptic BLFs, here we make no further assumptions on $c$, as it will be seen shortly that for any Torelli BLF the round cobordism will ``act on the homology trivially'' -- see Remark~\ref{roundTorelli}.\footnote{In analogy to the hyperelliptic case, we could of course ask $t_c$ to be contained in $\I(\Sigma_g)$, which would amount to $c$ being a separating curve, and require allowing simplified BLFs to have separating folds. This however makes almost no difference to the results we present herein; monodromies of such simplified BLFS are also well understood \cite{B1}, we could easily replace the examples in Proposition~\ref{TorelliAbundance} with those having separating round $2$-handles, and Theorem~\ref{almostcomplex} would be proved in an identical way.}

We begin with noting that, in great contrast to genuine Lefschetz fibrations, there exist Torelli BLFs:

\begin{proposition} \label{TorelliAbundance}
There are infinitely many genus-$g$ nontrivial relatively minimal Torelli broken Lefschetz fibrations for every $g \geq 2$. 
\end{proposition}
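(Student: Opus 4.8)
The plan is to exploit the one feature that genuinely separates BLFs from honest Lefschetz fibrations, as recorded in the definition of a Hurwitz cycle system: the monodromy of a genus-$g$ Lefschetz fibration over $S^2$ must satisfy $t_{c_n}\cdots t_{c_1}=1$ in $\Map(\Sigma_g)$, whereas for a BLF it is only required that $t_{c_n}\cdots t_{c_1}\in\Ker(\Phi_c)$. Since a Dehn twist lies in $\I(\Sigma_g)$ precisely when its core curve is separating, a Torelli cycle system is one all of whose $c_i$ are separating; Smith's theorem says the only such product equal to $1$ is trivial, but $\Ker(\Phi_c)$ is a much larger group than $\{1\}$, and the strategy is to find separating twists already contained in it. I would therefore exhibit a single separating curve whose twist lies in $\Ker(\Phi_c)$ and then vary its multiplicity to produce an infinite family.

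First I would fix $g\geq 2$ and choose a genus-$1$ subsurface $S\subset\Sigma_g$ with a single (hence separating) boundary curve $\delta=\partial S$, together with a nonseparating simple closed curve $c\subset S$; because $g\geq 2$ the curve $\delta$ is non-null-homotopic and genuinely separating. As $\delta$ is disjoint from $c$ we have $t_\delta(c)=c$, so $t_\delta\in\Map(\Sigma_g)(c)$. The key computation is that $t_\delta\in\Ker(\Phi_c)$: capping off $c$ turns the pair of pants obtained by cutting $S$ along $c$ into a disk bounded by $\delta$ and carrying the two marked points $p_1,p_2$, so $\widetilde{\Phi_c}(t_\delta)$ is the twist about a curve bounding a twice-marked disk; applying the forgetful homomorphism $F_{p_1,p_2}$ then lets $\delta$ bound an honest disk in $\Sigma_c\cong\Sigma_{g-1}$, whence $\Phi_c(t_\delta)=F_{p_1,p_2}(\widetilde{\Phi_c}(t_\delta))=1$. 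Thus $t_\delta\in\Ker(\Phi_c)\cap\I(\Sigma_g)$.

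With this in hand, for each $k\geq 1$ the tuple $W_k=(c;\delta,\ldots,\delta)$ with $k$ copies of $\delta$ satisfies $t_\delta^{\,k}\in\Ker(\Phi_c)$, since the kernel is a subgroup, and hence is a Hurwitz cycle system; by the correspondence of Theorem~\ref{bijectionthm} it determines a genus-$g$ BLF $f_k\colon X_k\to S^2$. Each $f_k$ is Torelli, as every vanishing twist $t_\delta$ lies in $\I(\Sigma_g)$; it is relatively minimal, as $\delta$ is never null-homotopic; and it is nontrivial, as it carries $k\geq 1$ Lefschetz singular fibers. To separate the members of the family I would use that the number of Lefschetz critical points is a diffeomorphism invariant of the fibration and equals $k$ for $f_k$; equivalently, passing from $f_k$ to $f_{k+1}$ raises $\chi(X_k)$ by one, so the total spaces are pairwise non-homeomorphic and the $f_k$ pairwise nonisomorphic for all $g\geq 2$. (For $g\geq 3$ this also follows from Theorem~\ref{thm_isom_equiv}, since elementary transformations and simultaneous actions preserve the length $n$ of a cycle system.)

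The only delicate step is the verification that $t_\delta\in\Ker(\Phi_c)$, namely the capping-then-forgetting identification above; I expect this to be the main obstacle, since it is where the geometry of $c$ inside $S$ and the precise model for $\Phi_c=F_{p_1,p_2}\circ\widetilde{\Phi_c}$ from Lemma~\ref{lem_welldefinedness} must be used carefully. Everything else — that $W_k$ is a valid (simplified) cycle system, that the resulting fibration has connected fibers and a standard round handle, and that the family is infinite — is then formal.
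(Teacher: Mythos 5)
Your construction is essentially the paper's: the paper also takes iterated copies of a separating vanishing cycle bounding a genus-one subsurface containing the round vanishing cycle $c$, notes that the monodromy dies under the capping homomorphism so the fibration closes up over $S^2$, and distinguishes the members of the family by the Euler characteristic of the total space. The only difference is presentational --- the paper exhibits the family through an explicit handle diagram rather than through the Hurwitz cycle system correspondence --- and your verification that $t_\delta\in\Ker(\Phi_c)$ is correct.
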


\begin{proof}
The Figure~\ref{F:TorelliBLF} describes a genus-$g$ broken Lefschetz fibration $f_n: X_n \to S^2$ with isotopic vanishing cycles $c_1, \ldots, c_n$ (iterated along the dotted lines in the figure) bounding a genus one subsurface. (See \cite{B1} for handle diagrams of BLFs.)  Here, the round $2$-handle is made of the $0$-framed $2$-handle on the fiber and a $3$-handle, both given in red. Under the capping homomorphism, the image of the global monodromy of the higher side becomes trivial on the lower side, yielding a true BLF over the $2$-sphere. Observe that the monodromy is supported away from a self-intersection $0$-sphere section. 

The claims of the proposition are now easy to verify: All $c_i$ are separating, so each $f_n$ is Torelli. As $n$ increases by one, so does the Euler characteristic of $X_n$, thus $f_n$ are all distinct.
\begin{figure}[htbp]
\begin{center}
\includegraphics[width=130mm]{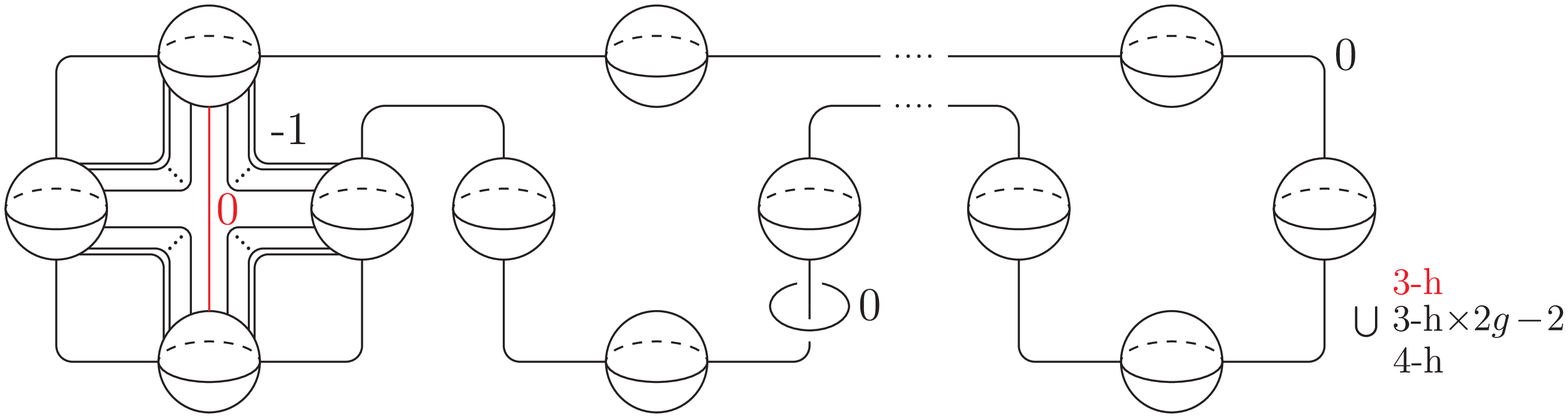}
\end{center}
\caption{Handle diagram of a genus-$g$ Torelli BLF. }
\label{F:TorelliBLF}
\end{figure}
\end{proof}

\noindent 
It is easy to prove by Kirby calculus that the total space of the fibration in \linebreak Figure~\ref{F:TorelliBLF} is $S^1\times S^3 \, \sharp \, S^2\times \Sigma_{g-1} \, \sharp \, n \CPb$. (Hint: slide all Lefschetz $2$-handle over the $0$-handle of the round $2$-handle, and then blow-down the resulting $(-1)$-spheres to arrive at the step fibration \cite{B1}.)

However, the next theorem shows that the Torelli fibrations find room to exist due to lack of almost complex structures on their total spaces:
 
\begin{theorem} \label{almostcomplex}
An almost complex $4$-manifold does not admit a non-trivial relatively minimal Torelli broken Lefschetz fibration. 
\end{theorem}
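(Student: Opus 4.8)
The plan is to derive a contradiction from the standard numerical (parity) obstruction to the existence of an almost complex structure. Recall that a closed oriented $4$-manifold $X$ admits an almost complex structure only if there is a class $c\in H^2(X;\Z)$ with $c\equiv w_2(X)\pmod 2$ and $c^2=2\chi(X)+3\sigma(X)$; since any characteristic class satisfies $c^2\equiv\sigma(X)\pmod 8$, this forces $\chi(X)+\sigma(X)\equiv 0\pmod 4$, i.e. $1-b_1(X)+b^+(X)$ must be even. I would prove the theorem by showing that a nontrivial relatively minimal Torelli BLF forces instead $\chi(X)+\sigma(X)\equiv 2\pmod 4$, i.e. $1-b_1(X)+b^+(X)$ odd. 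First I would dispose of the honest case $Z_f=\emptyset$: there $f$ is a relatively minimal Lefschetz fibration over $S^2$, and by Smith \cite{Smith} no nontrivial one has Torelli monodromy, so there is nothing to prove. Hence I may assume $Z_f\neq\emptyset$ with Hurwitz cycle system $(c;c_1,\dots,c_n)$, $n\geq 1$.

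Next I would unwind the Torelli hypothesis at the level of curves. The transvection formula $t_{c_i}(x)=x+\langle x,c_i\rangle c_i$ on $H_1(\Sigma_g;\Z)$ shows that $t_{c_i}$ acts trivially on homology exactly when $[c_i]=0$, so every vanishing cycle $c_i$ is separating; relative minimality makes each an \emph{essential} separating curve, and the global monodromy $t_{c_n}\cdots t_{c_1}$ acts trivially on $H_1(\Sigma_g)$. I would then read off the invariants from the round handle decomposition $X=X_h\cup X_r\cup X_l$ into higher side, round cobordism, and lower side. Since the round cobordism has vanishing Euler characteristic and is glued along surface bundles over $S^1$ (which have $\chi=0$), one gets $\chi(X)=\chi(\Sigma_g)+n+\chi(\Sigma_{g-1})=6-4g+n$. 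Because the separating (hence nullhomologous) vanishing cycles impose no relations on $H_1$ and the monodromy acts trivially there, $H_1(X)$ is carried by the lower fiber together with the base circle of the round cobordism, giving $b_1(X)=2g-1$. In particular $1-b_1(X)=2-2g$ is even, so the parity of $1-b_1(X)+b^+(X)$ equals that of $b^+(X)$.

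It then remains to show that $b^+(X)$ is odd; indeed I expect $b^+(X)=1$. The nonvanishing of the fiber class makes $X$ near-symplectic, providing one positive class $[\omega]$ with $[\omega]^2>0$. The rest of the second homology is spanned by a horizontal class dual to the fiber and by the $n$ closed surfaces obtained by capping the Lefschetz thimbles of the separating vanishing cycles with their bounding subsurfaces; each such surface has self-intersection $-1$, and I would argue that the sublattice they span is negative (semi)definite, so that no further positive direction is created and $b^+(X)=1$. Granting this, $1-b_1(X)+b^+(X)$ is odd, equivalently $\chi(X)+\sigma(X)\equiv 2\pmod 4$, contradicting the parity condition above; hence $X$ admits no almost complex structure.

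The main obstacle is precisely this last step: controlling the contribution of the separating vanishing cycles to the intersection form. For nonseparating cycles the local contribution is a clean $-1$, but separating cycles can a priori contribute positively (as is already visible in Endo-type signature formulas), so the content is to show that, after accounting for their mutual intersections, the thimble sublattice remains negative (semi)definite — equivalently that $n+\sigma(X)\equiv 0\pmod 4$. In the model family of Proposition~\ref{TorelliAbundance}, where $X_n=S^1\times S^3\,\sharp\,S^2\times\Sigma_{g-1}\,\sharp\,n\,\CPb$, the thimbles are exactly the $n$ exceptional $(-1)$-classes, so $b^+=1$, $\sigma=-n$, and $\chi+\sigma=6-4g\equiv 2\pmod 4$, confirming the mechanism; the remaining work is to make this robust for an arbitrary configuration of essential separating vanishing cycles.
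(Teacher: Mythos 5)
Your overall strategy is the same as the paper's: the obstruction used is the integrality of $\chi_h(X)=\tfrac14(\chi(X)+\sigma(X))$ for an almost complex $4$-manifold, and your Euler characteristic computation $\chi(X)=6-4g+n$ agrees with the paper's. (Your reduction of the $Z_f=\emptyset$ case to Smith's theorem is fine and in fact necessary, since for an honest Lefschetz fibration $\chi+\sigma=4-4g-0\equiv 0\pmod 4$ and the parity argument alone would not apply.) However, there is a genuine gap exactly where you flag it: the claim that $b^+(X)=1$, equivalently that the sublattice spanned by the capped-off thimbles of the separating vanishing cycles is negative (semi)definite, is the entire content of the signature computation and you do not prove it. Asserting that the near-symplectic form provides ``one positive class'' and that ``no further positive direction is created'' is not an argument; a priori the thimble classes could pair with each other and with the horizontal classes so as to produce additional positive directions, and your own appeal to Endo-type formulas (where separating cycles can carry positive coefficients in the closed case) shows you are aware the issue is not formal.

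The paper closes this gap without ever computing $b^+$ or $b_1$: it applies Novikov additivity to the decomposition $X=X_h\cup X_r\cup X_l$ and uses two citable inputs. First, by Ozbagci's handle-by-handle signature analysis for Lefschetz fibrations over the disk, each \emph{separating} vanishing cycle contributes exactly $-1$ to $\sigma(X_h)$ (this is the clean case, contrary to your worry; it is the nonseparating cycles whose contribution can be $0$ or $-1$), so $\sigma(X_h)=-n$. Second, by \cite[Lemma 4.4]{HS2} the round cobordism contributes $\epsilon\in\{-1,0,1\}$, and the trivial lower side contributes $0$. Hence $\sigma(X)=-n+\epsilon$ and
\[
\chi(X)+\sigma(X)=6-4g+\epsilon,
\]
which is never divisible by $4$ since $2+\epsilon\in\{1,2,3\}$; note that the exact value of $\epsilon$ is irrelevant. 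If you want to salvage your route, the missing lemma you need is precisely $\sigma(X_h)=-n$ for a relatively minimal Lefschetz fibration over $D^2$ with all vanishing cycles separating, together with control of the round cobordism's signature; both should be quoted rather than rederived from an ad hoc intersection-form argument.
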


\begin{proof}
Let $X$ be the total space of a genus-$g$ Torelli BLF. From the corresponding handle decomposition, we calculate the Euler characteristic of $X$ as
\[ \eu(X) = 6-4g+ n . \]

On the other hand, the signature of $X$ is calculated as:
\[ \sigma(X) = -n + \epsilon \, , \]
where $\epsilon \in \{-1, 0, 1\}$. This is because on the higher side, every vanishing cycle is separating and contributes a $-1$ to the signature calculation \cite{Ozbagci}. On the lower side we have a trivial bundle, whose signature is $0$. As shown in \cite[Lemma 4.4]{HS2} the round cobordism contributes either $-1, 0$, or $+1$ to the total signature, and hence our calculation up to determining the exact value of $\epsilon$.

Combining the above we calculate 
\[ \chi_h(X)= \frac{1}{4}(\eu(X) +\sigma(X)) = \frac{1}{4}(6-4g+n - n + \epsilon) = 1-g + \frac{1}{4}(2 + \epsilon) \, ,\]
which can never be an integer. However, $\chi_h$, which would serve as the holomorphic Euler characteristic on an almost-complex $4$-manifold, is not an integer in this case, proving our claim.
\end{proof}

The above theorem, along with Proposition~\ref{TorelliAbundance} completes the proof of Theorem~\ref{torellithm}.

\begin{remark} \label{roundTorelli}
Although we did not need to determine the exact value of $\epsilon$, the signature of a round cobordism, in the proof, it is possible to see that it is $0$ for any round cobordism as above, following \cite{HS2} (to which the reader can refer to for the notations we adopt here).

Since the global monodromy $\varphi$ of the higher side of a genus-$g$ Torelli BLF is in the Torelli group, using the Mayer-Vietoris exact sequence we can obtain an isomorphism $\psi:H_1(V_\varphi;\mathbb{Q})\to H_1(S^1\times \Sigma_g;\mathbb{Q})$ which makes the following diagram commute: 
\[
\begin{CD}
H_1(\Pa V_{\tilde{\varphi}}^\prime;\mathbb{Q}) @> i_\ast >> H_1(V_{\tilde{\varphi}}^\prime;\mathbb{Q}) \\
@V \id_\ast VV @VV \psi V \\
H_1(S^1\times \Pa\Sigma_{g,2};\mathbb{Q}) @> i_\ast >> H_1(S^1\times \Sigma_{g,2};\mathbb{Q}), 
\end{CD}
\]
where $\tilde{\varphi}\in \Map(\Sigma_{g,2})$ is a lift of $\varphi$. 
Chasing this commutative diagram, we conclude that $s(\varphi) = \Sign(m(\varphi)) = \Sign(m(\id_\ast)) = 0$. 
\end{remark}

\begin{remark}
When a given BLF on a $4$-manifold $X$ does not have a special monodromy (say hyperelliptic or Torelli), it is plausible that $X$ admits another BLF which carries the desired property. In fact, it is easy to see that the ``flip-and-slip'' stabilization  \cite{B2} of a hyperelliptic (resp. Torelli) BLF can result in a homotopic BLF which no longer is hyperelliptic (resp. never is Torelli). One can then regard these as $4$-manifold properties instead, and ask how diverse of a picture we get for the special fibrations studied here. 

By Remark~\ref{hyperNS} and Theorem~\ref{almostcomplex}, it is clear that there are many $4$-manifolds which admit neither hyperelliptic nor Torelli BLFs, such as any odd number of connected sums of $\CP$. Torelli BLF examples of Proposition~\ref{TorelliAbundance} can easily be seen to be hyperelliptic as well. In the same construction, one can take non-isotopic vanishing cycles $c_1, \ldots, c_n$ so that they all bound different genus-$1$ subsurfaces containing $c$, and thus we would still have a Torelli BLF. With this freedom of choice in mind, it looks plausible (though we do not attempt to digress more about here) one can choose these $c_i$ in a way that there is no hyperelliptic involution on the fiber for which $c, c_1, \ldots, c_n$ are all symmetric, so as to produce Torelli BLFs which are not hyperelliptic. Lastly, there are many almost complex $4$-manifolds, such as $S^4$, $S^1 \x S^3$ or the minimal rational ruled surfaces which admit genus-$1$ BLFs. So they all admit hyperelliptic BLFs, but no Torelli ones. 
\end{remark}

\vspace{0.2cm}
\subsection{An infinite family of nonisomorphic broken Lefschetz fibrations} \label{nonisomorphicexamples} \

In this section we will construct the examples in Theorem~\ref{infinite}. Let $\Sigma$ be a closed oriented surface of genus $g\geq 2$ and $c\subset\Sigma$ be a non-separating simple closed curve. We denote by $\Sigma_c$ the genus $(g-1)$ surface obtained by surgery along $c$. The surface $\Sigma_c$ has two marked points $p_1, p_2$ at the centers of the disks attached in the surgery. We denote by $\mc{S}_c$ the set of isotopy classes of separating curves in $\Sigma$ which bounds a torus with one boundary component containing $c$, and by $\mc{P}_c$ the set of isotopy classes of simple paths in $\Sigma_c$ between $p_1$ and $p_2$ (where isotopies fix the points $p_1$ and $p_2$). We define a map $\Pi: \mc{S}_c\to \mc{P}_c$ as follows: For $d\in \mc{S}_c$, we take a simple closed curve $\alpha(d)$ in the torus bounded by $d$ so that $\alpha(d)$ intersects $c$ in one point transversely. Then $\Pi(d)$ is defined to be the isotopy class of a path corresponding $\alpha(d)$. For $d_1,d_2\in \mc{S}_c$, we denote by $I(d_1,d_2)$ the geometric intersection number $i(\Pi(d_1),\Pi(d_2))$ between $\Pi(d_1)$ and $\Pi(d_2)$, that is, 
\[
I(d_1,d_2) = \min\{\sharp (\gamma_1\cap\gamma_2)~|~\gamma_i \in\Pi(d_i), \gamma_1 \pitchfork \gamma_2 \}.
\]
We will need the following lemma for our construction:

\begin{lemma}\label{T:invariance I}
Let $f_i:X\to S^2$ ($i=1,2$) be a genus-$g$ BLF and $W_i =(c;d^{(i)}_1,d^{(i)}_2)$ a Hurwitz cycle system of $f_i$.
Suppose that each $d^{(i)}_j$ is contained in $\mc{S}_c$. 
If $f_1$ and $f_2$ are isomorphic, then $I(d_1^{(1)}, d_2^{(1)})$ is equal to $I(d_1^{(2)}, d_2^{(2)})$.
\end{lemma}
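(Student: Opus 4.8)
The plan is to deduce the lemma from the classification Theorem~\ref{thm_isom_equiv} and then to check that the number $I$ is invariant under each of the two generating Hurwitz moves, while simultaneously verifying that the moves never leave the set of systems on which $I$ is defined. Since $f_1$ and $f_2$ are isomorphic, Theorem~\ref{thm_isom_equiv} provides a finite chain $W_1 = V_0, V_1, \ldots, V_k = W_2$ in which each $V_{j+1}$ is obtained from $V_j$ by an elementary transformation, a simultaneous action, or an inverse of one of these. Each $V_j$ is a Hurwitz cycle system with exactly two Lefschetz cycles, say $V_j = (c_j; d_1^{(j)}, d_2^{(j)})$. The first point I would settle is that the property ``both $d_1^{(j)}$ and $d_2^{(j)}$ lie in $\mathcal{S}_{c_j}$'' is preserved by every move, so that $I(d_1^{(j)}, d_2^{(j)})$ stays defined for all $j$. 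For an elementary transformation $(c; d_1, d_2) \mapsto (c; d_2, t_{d_2}(d_1))$ this holds because $d_2 \in \mathcal{S}_c$ forces $t_{d_2}$ to be supported in an annulus disjoint from $c$, whence $t_{d_2} \in \Map(\Sigma)(c)$ and $t_{d_2}$ carries the genus-one subsurface bounded by $d_1$ and containing $c$ to the one bounded by $t_{d_2}(d_1)$ and still containing $t_{d_2}(c)=c$. The inverse move and the simultaneous action (which merely transports the whole picture by $h$) are handled in the same way.

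The heart of the argument is invariance of $I$ under an elementary transformation, and the key input is an \emph{equivariance} of the map $\Pi$: for every $\phi \in \Map(\Sigma)(c)$ and every $d \in \mathcal{S}_c$ with $\phi(d) \in \mathcal{S}_c$, I expect $\Pi(\phi(d)) = \widetilde{\Phi_c}(\phi)\bigl(\Pi(d)\bigr)$, where $\widetilde{\Phi_c}$ is the capping homomorphism of Lemma~\ref{lem_welldefinedness}. This should follow because $\phi$ sends a choice of $\alpha(d)$ to a valid choice of $\alpha(\phi(d))$, and because $\phi$ fixes $c$ it descends, under surgery along $c$, to the action of $\widetilde{\Phi_c}(\phi)$ on $\Sigma_c$, taking the isotopy class $\Pi(d)$ to $\Pi(\phi(d))$. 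Granting this, set $\phi = t_{d_2}$ and write $\bar t_{d_2} = \widetilde{\Phi_c}(t_{d_2})$, an element of $\Map(\Sigma_c; p_1,p_2)$ fixing each marked point. Applying the equivariance with $d = d_2$ and using $t_{d_2}(d_2) = d_2$ yields $\bar t_{d_2}(\Pi(d_2)) = \Pi(d_2)$, while applying it with $d = d_1$ gives $\Pi(t_{d_2}(d_1)) = \bar t_{d_2}(\Pi(d_1))$. Since the geometric intersection number of isotopy classes of arcs is invariant under the homeomorphism $\bar t_{d_2}$ and is symmetric in its two entries, this gives
\[ I\bigl(d_2, t_{d_2}(d_1)\bigr) = i\bigl(\Pi(d_2), \bar t_{d_2}(\Pi(d_1))\bigr) = i\bigl(\bar t_{d_2}(\Pi(d_2)), \bar t_{d_2}(\Pi(d_1))\bigr) = i\bigl(\Pi(d_2),\Pi(d_1)\bigr) = I(d_1,d_2). \]
Invariance under a simultaneous action by $h$ is more transparent: $h$ induces a homeomorphism $\Sigma_c \to \Sigma_{h(c)}$ carrying marked points to marked points and intertwining the two instances of $\Pi$, so it preserves the relevant geometric intersection number.

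Chaining these invariances along $V_0, \ldots, V_k$ then yields $I(d_1^{(1)}, d_2^{(1)}) = I(d_1^{(2)}, d_2^{(2)})$, proving the lemma. The step I expect to be the \textbf{main obstacle} is establishing the equivariance of $\Pi$ cleanly: one must check that the surgery construction defining $\Pi$ is natural with respect to the $\Map(\Sigma)(c)$-action and that it descends to the capping homomorphism $\widetilde{\Phi_c}$ exactly, rather than up to the kernel $\langle t_c\rangle$ of Lemma~\ref{lem_welldefinedness}. This is precisely where the hypothesis $d \in \mathcal{S}_c$ is used, since it forces $\alpha(d)$ to meet $c$ once and forces $t_d$ to be supported away from $c$. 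A secondary but necessary point is the bookkeeping ensuring that each move preserves membership in $\mathcal{S}_{c_j}$, so that $I$ is never undefined along the chain; everything else, namely well-definedness of $\Pi$ and the topological invariance of geometric intersection number, is standard.
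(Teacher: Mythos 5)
Your proposal is correct and takes essentially the same approach as the paper: reduce via Theorem~\ref{thm_isom_equiv} to checking invariance of $I$ under the two Hurwitz moves, treat the simultaneous action as immediate, and for the elementary transformation identify $\Pi(t_{d_2}(d_1))$ with the image of $\Pi(d_1)$ under the descended twist on $\Sigma_c$ (your $\widetilde{\Phi_c}(t_{d_2})$ is exactly the paper's $t_{\beta(d_2)}$), which fixes $\Pi(d_2)$, and then invoke invariance of geometric intersection number under homeomorphisms. The only difference is presentational: you package the key step as a general equivariance of $\Pi$ under $\Map(\Sigma)(c)$, whereas the paper verifies just the specific instance needed.
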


\begin{proof}[Proof of Lemma~\ref{T:invariance I}]
Since $f_1$ and $f_2$ are isomorphic, $W_1$ and $W_2$ are Hurwitz equivalent by Theorem~\ref{thm_isom_equiv}. (Note that the ``only if'' part of Theorem~\ref{thm_isom_equiv} still holds without the assumption on genera of fibrations.)
Furthermore, it is obvious that $I(d_1,d_2)$ is invariant under simultaneous conjugations. 
Thus, it is sufficient to prove that $I(d_1,d_2)$ is equal to $I(d_2,t_{d_2}(d_1))$. 

Let $\beta(d_2)$ be the boundary of a regular neighborhood of $\Pi(d_2)$, which is a simple closed curve in $\Sigma_c$. 
This curve corresponds with $d_2\subset \Sigma$ via surgery along $c$.
Since the curve $t_{d_2}(\alpha(d_1))$ is contained in the torus bounded by $t_{d_2}(d_1)$ and intersects $c$ in one point transversely, 
The path $\Pi(t_{d_2}(d_1))$ is isotopic to $t_{\beta(d_2)}(\Pi(d_1))$. 
Thus $I(d_2,t_{d_2}(d_1))$ is calculated as follows:
{\allowdisplaybreaks
\begin{align*}
I(d_2,t_{d_1}(d_2)) & = i(\Pi(d_2),\Pi(t_{d_2}(d_1))) \\
& = i(t_{\beta(d_2)}(\Pi(d_2)),t_{\beta(d_2)}(\Pi(d_1))) \\
& = i(\Pi(d_2),\Pi(d_1)) = I(d_1,d_2).
\end{align*}
}
\end{proof}

\begin{proof}[Proof of Theorem~\ref{infinite}]
We take curves $d,d_1,d_2^0\in \mc{S}_c$ as shown in Figure~\ref{F:separating scc path} and denote $t_{d}^n(d_2^0)$ by $d_2^n$.
\begin{figure}[htbp]
\centering
\subfigure[]{\includegraphics[width=90mm]{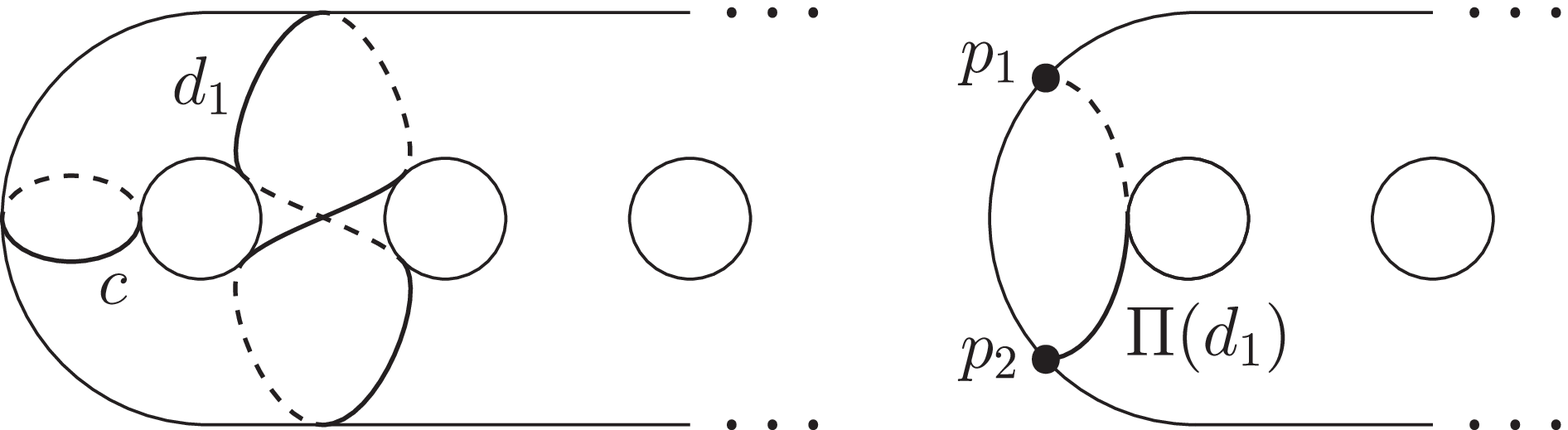}
\label{F:separating scc path1}}
\subfigure[]{\includegraphics[width=90mm]{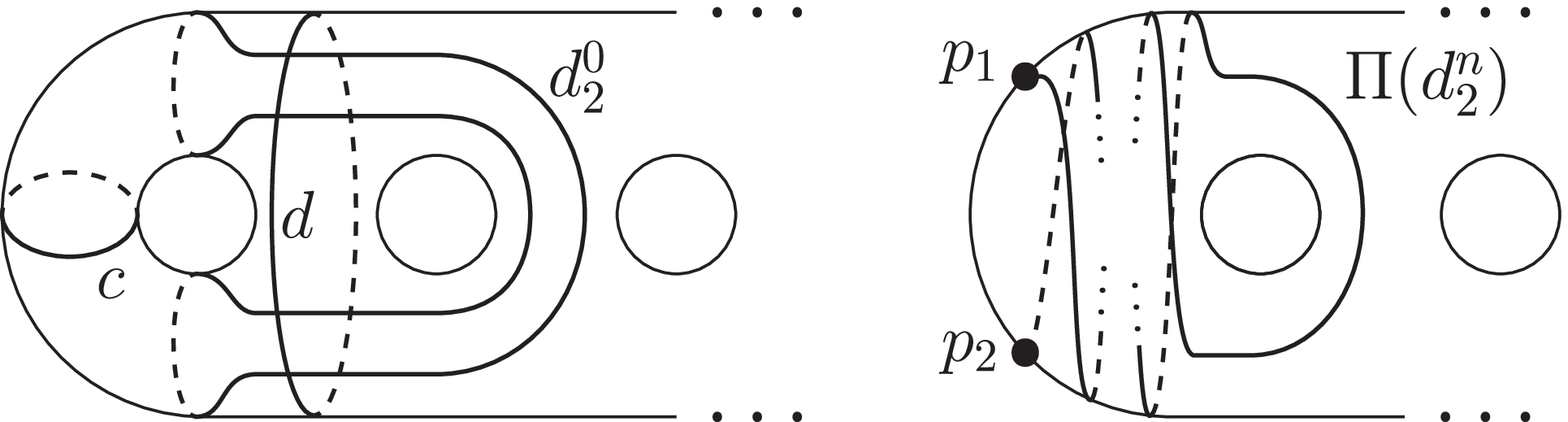}
\label{F:separating scc path2}}
\caption{Separating curves and corresponding paths.}
\label{F:separating scc path}
\end{figure}
The product $t_{d_2^n}t_{d_1}$ is contained in $\ker(\Phi_c)$. 
Thus there exists a genus-$g$ BLF $f_n:X_n\to S^2$ whose Hurwitz cycle system is $(c;d_1,d_2^n)$.
Using the bigon criterion for paths \cite[Section 1.2.7]{FM}, we can prove that $I(d_1,d_2^n)=4n+2$ for any $n\geq 0$. In particular $f_n$ is not isomorphic to $f_m$ for distinct $n,m\geq 0$.

Since both of the curves $d_1$ and $d_2^n$ are disjoint from $c$, we can change $f_n$ by a \textit{homotopy} preserving tubular neighborhoods of regular fibers (in both higher and lower sides) so that the two Lefschetz singularities are in the lower side of $f_n$. The Lefschetz vanishing cycles of the resulting fibration $\hat{f}_n$ are null-homotopic since both $d_1$ and $d_2^n$ bound tori containing $c$. Thus $\hat{f}_n$ and $\hat{f}_m$ are isomorphic, in particular there is a diffeomorphism $\Phi_{n,m}:X_n \to X_m$ for any $n,m\geq 0$. Furthermore $\Phi_{n,m}$ sends tubular neighborhood of a regular fiber of $f_n$ (preserved by the homotopy above) to that of $f_m$. Hence $\{h_n:=f_n\circ \Phi_{0,n} \}_{n\geq 0}$ is an infinite family of non-isomorphic BLFs on $X=X_0$ such that each fibration in the family has the same surfaces as regular fibers. 
\end{proof}

\begin{remark} \label{cohomotopy}
It is well-known that the cohomotopy set $\pi^2(X)= [X, S^2]$ is isomorphic to $\mathbb{F}_2(X)$, the set of framed surfaces in $X$ (e.g. \cite{KMT}). Our theorem thus shows that there is an infinite family of BLFs $(X, h_n)$, $n \in \N$, within the same cohomotopy class, demonstrating the rigidity of BLF isomorphisms versus homotopy equivalence of indefinite generic maps.
\end{remark}

\vspace{0.2in} \noindent \textit{Acknowledgements.} The first author was partially supported by Simons Foundation Grant $317732$. The second author was supported by JSPS Grant-in-Aid for Young Scientists (B) $26800027$.

\vspace{0.4in}


\begin{thebibliography}{99999}


\bibitem{ADK} D. Auroux, S. Donaldson, and L. Katzarkov, {\emph ``Singular Lefschetz pencils'',} Geom. 
Topol. 9 (2005), 1043--1114.

\bibitem{B1}  R. I. Baykur, {\emph ``Topology of broken Lefschetz fibrations and near-symplectic $4$-
manifolds''}, Pacific J. Math. 240 (2009), no.  2, 201--230.

\bibitem{B2} R. I. Baykur, {\emph ``Existence of broken Lefschetz fibrations'',} Int. Math. Res. Not. IMRN (2008), Art. ID rnn 101, 15 pp.

\bibitem{B3} R. I. Baykur, {\emph ``Handlebody argument for modifying achiral singularities'',} appendix to
\cite{L}), Geom. Topol. 13 (2009), 312--317.

\bibitem{B4} R. I. Baykur, {\emph ``Broken Lefschetz fibrations and smooth structures on $4$-manifolds,} Proceedings of the Freedman Fest, 9–-34, Geom. Topol. Monogr., 18, Geom. Topol. Publ., Coventry, 2012.

\bibitem{B5} R. I. Baykur, {\emph ``Inequivalent Lefschetz fibrations and surgery equivalence of symplectic \linebreak $4$-manifolds,''} preprint; http://arxiv.org/abs/1408.4869.

\bibitem{BHmultisection} R.\,I. Baykur and K. Hayano, {\emph ``Multisections of Lefschetz fibrations and topology of symplectic $4$-manifolds,''} in preparation, revision of http://arxiv.org/abs/1309.2667. 

\bibitem{BK} R. I. Baykur and S. Kamada, {\emph ``Classification of broken Lefschetz fibrations with small fiber genera'',} to appear in J. of Math. Soc. Japan; http://arxiv.org/abs/1010.5814.


\bibitem{Behrens} S. Behrens, {\emph ``On $4$-manifolds, folds and cusps''}, Pacific J. Math. 264 (2013), no. 2, 257–-306. 

\bibitem{EE} C.J. Earle and J. Eels, {\emph ``A fibre bundle description of Teichm\"uller theory'',} J.
Diff. Geom. 3 (1969), 19–-43.

\bibitem{Endo} H. Endo, {\emph ``Meyer's signature cocycle and hyperelliptic fibrations'',} Math. Ann. 316(2000), 237--257

\bibitem{FM} B. Farb and D. Margalit, \textit{A Primer on Mapping Class Groups}, Princeton University Press, 2011.

\bibitem{Fuller} T. Fuller, {\emph ``Hyperelliptic Lefschetz fibrations and branched covering spaces,''} Pacific J. Math. 196(2000), 369–-393.

\bibitem{GK1} D. Gay and R. Kirby, {\emph ``Constructing Lefschetz-type fibrations on four-manifolds'',} Geom. Topol. 11 (2007), 2075--2115.

\bibitem{GS} R.\,E. Gompf and A.\,I. Stipsicz, \textit{$4$-manifolds and Kirby calculus}, Graduate Studies in Math. {\bf 20}, Amer. Math. Soc., Providence, RI, 1999.

\bibitem{H1} K. Hayano, {\emph ``On genus-$1$ simplified broken Lefschetz fibrations'',} Algebraic \& Geometric Topology 11 (2011) 1267–1322.

\bibitem{H2} K. Hayano, {\emph ``A note on sections of broken Lefschetz fibrations'',} 
Bull. Lond. Math. Soc. 44 (2012), no. 4, 823–-836. 

\bibitem{HS1} K. Hayano and M. Sato, {\emph ``Four-manifolds admitting hyperelliptic broken Lefschetz fibrations'',}
Michigan Math. J. 62(2013), no. 2, 323--351. 

\bibitem{HS2}  K. Hayano and M. Sato, {\emph  ``A signature formula for hyperelliptic broken Lefschetz fibrations'',}
Topology Appl. 173(2014), 157--174. 

\bibitem{Ivanov_2002_MCG} N. V. Ivanov, {\emph ``Mapping class groups''}, Handbook of geometric topology, 523--633, North-Holland, Amsterdam, 2002.

\bibitem{Janich} K. J\"anich, {\emph ``Symmetry properties of singularities of $C^\infty$-functions''}, Math. Ann., 238(1978), 147--156. 

\bibitem{Kas} A. Kas, {\emph ``On the handlebody decomposition associated to a Lefschetz fibration''}, Pacific J. Math. \textbf{89}(1980), 89--104

\bibitem{KMT} R. Kirby, P. Melvin and P. Teichner, {\emph ``Cohomotopy sets of $4$-manifolds''}, Proceedings of the Freedman Fest, 161–-190, Geom. Topol. Monogr., 18, Geom. Topol. Publ., Coventry, 2012. 


\bibitem{L} Y. Lekili, {\emph ``Wrinkled fibrations on near-symplectic manifolds'',} Geom. Topol. 13 (2009), 277--318.

\bibitem{Matsumoto} Y. Matsumoto, {\emph ``Lefschetz fibrations of genus two --- a topological approach'',} Sadayoshi Kojima et al. (Eds.), Proceedings of the 37th Taniguchi Symposium on Topology and Teichm\"uller
Spaces, World Scientific, Singapore, 1996, 123–-148.

\bibitem{Ozbagci} B. Ozbagci, {\emph ``Signatures of Lefschetz fibrations,''} Pacific J. Math. 202 (2002), no. 1, 99–-118.

\bibitem{ParkYun} J. Park and K.-H. Yun, {\emph ``Nonisomorphic Lefschetz fibrations on knot surgery $4$-manifolds,''} Math. Ann. 345 (2009), no. 3, 581–-597.

\bibitem{ST} B. Siebert and G. Tian, {\emph ``On hyperelliptic $C^{\infty}$ Lefschetz fibrations of four-manifolds,''} Commun. Contemp. Math. 1(1999), no. 2, 255–-280.

\bibitem{ST2} B. Siebert and G. Tian, {\emph ``On the holomorphicity of genus two Lefschetz fibrations,''} Ann. of Math. (2) 161 (2005), no. 2, 959–-1020.

\bibitem{Smith} I. Smith, {\emph ``Lefschetz fibrations and the Hodge bundle,''} Geom. Topol. 3 (1999), 211–-233.

\bibitem{W} J. Williams. {\emph ``The h-principle for broken Lefschetz fibrations'',} Geom. Topol.  14 (2010), no. 2, 1015--1063. 


\end{thebibliography}
\end{document}